\colorlet{mdtRed}{red!50!black}
\definecolor{dblue}{rgb}{0,0,.6}
\DeclareMathOperator{\Pic}{\textnormal{Pic}}
\DeclareMathOperator{\Br}{\textnormal{Br}}
\DeclareMathOperator{\Mxi}{M_{\xi}}
\DeclareMathOperator{\MLxi}{M^L_{\xi}}
\DeclareMathOperator{\Malpha}{M_{\alpha}}
\DeclareMathOperator{\Mbeta}{M_\beta}
\DeclareMathOperator{\PNalpha}{\textnormal{PN}_{\alpha,i}}
\DeclareMathOperator{\MLalpha}{M^L_{\alpha}}
\DeclareMathOperator{\codim}{\textnormal{codim}}
\DeclareMathOperator{\BY}{H^2 (Y_{\text{\'et}},\, \mathbb G_m)_{\text{torsion}}}
\DeclareMathOperator{\BYY}{H^2 (Y_{\text{\'et}},\, \mathbb G_m)}
\newtheorem{theorem}{Theorem}[section]
\newtheorem{lemma}[theorem]{Lemma} 
\newtheorem{proposition}[theorem]{Proposition}
\newtheorem{corollary}[theorem]{Corollary}
\newtheorem{claim}{Claim}[section]
\theoremstyle{definition}
\newtheorem{definition}[theorem]{Definition}
\numberwithin{equation}{section} 
\begin{document}
	
	\baselineskip=15.5pt 
	
	\title[Brauer group of moduli stack of stable parabolic $\textnormal{PGL}(r)$-bundles]{Brauer group of moduli stack
		of stable parabolic $\textnormal{PGL}(r)$-bundles over a curve}
	
	\author[I. Biswas]{Indranil Biswas}
	
\address{Mathematics Department, Shiv Nadar University, NH91, Tehsil Dadri,
Greater Noida, Uttar Pradesh 201314, India}

\email{indranil.biswas@snu.edu.in, indranil29@gmail.com}

\author[S. Chakraborty]{Sujoy Chakraborty}

	\address{Department of Mathematics, Indian Institute of Technology Chennai, Chennai, India}
	\email{sujoy.cmi@gmail.com}

\author[A. Dey]{Arijit Dey}

	\address{Department of Mathematics, Indian Institute of Technology Chennai, Chennai, India}
	\email{arijitdey@gmail.com}

	\subjclass[2010]{14D20, 14D22, 14D23, 14F22, 14H60}

	\keywords{Brauer group; Moduli space; Moduli stack; Parabolic bundle.} 	
	
	\begin{abstract}
		Let $k$ be an algebraically closed field of characteristic 
		zero. We prove that the Brauer group of the moduli stack of stable parabolic
$\textnormal{PGL}(r,k)$-bundles on a curve $X$, with full flag quasi-parabolic structures at an arbitrary 
		parabolic divisor, coincides with the Brauer group of the smooth locus of the corresponding coarse 
		moduli space of parabolic $\textnormal{PGL}(r,k)$-bundles.
		We also compute the Brauer group of the smooth locus of this coarse moduli for more general quasi-parabolic types and weights satisfying certain conditions.
	\end{abstract}
	
	\maketitle
	
	\section{introduction}
	
	Let $Y$ be a quasi-projective variety defined over an algebraically closed field $k$ of 
	characteristic zero. The cohomological Brauer group of $Y$ is defined to be the torsion part
	$$\BY\, \subset\, H^2 (Y_{\text{\'et}},\, \mathbb G_m),$$ and it is denoted by $\Br'(Y)$. When
	the variety $Y$ is smooth, it is known that the above group $\BYY$ is
	actually torsion. The Brauer group of $Y$, 
	which is denoted by $\Br(Y)$, is defined to be the Morita equivalence classes of Azumaya algebras over $Y$.  Giving an Azumaya algebra over $Y$ 
	is equivalent to giving a Brauer-Severi scheme (i.e., a projective bundle) over $Y$ in the \'etale topology, which is also equivalent to 
	giving a principal $\textnormal{PGL}(r,k)$-bundle over $Y$ for some $r$. The Brauer class of a
	projective bundle $\mathbb{P}\, \longrightarrow\, Y$ can be thought of as the 
	obstruction for $\mathbb{P}$ to be the projectivization of a vector bundle on $Y$. For
	each integer $r\, \geq\, 2$, the sequence of cohomologies associated to the exact sequence
	\[
	\{e\} \,\longrightarrow\, \mathbb \mu_{r}\,\longrightarrow\, \text{SL}_r\,\longrightarrow\, \text{PGL}_r\,\longrightarrow\, \{e\}
	\]
	gives a homomorphism $H^1(Y_{\text{\'et}}, \,\textnormal{PGL}_r)\,\longrightarrow\,
	\text{Br}'(Y)$, which induces an injective group
	homomorphism $$i\, :\, \text{Br}(Y)\,\longrightarrow\, \Br'(Y).$$ By a theorem of
	Gabber it is known that this homomorphism $i$ is surjective
	\cite{dJ13}. For an algebraic stack $Y$, by Brauer group of $Y$ we mean the cohomological Brauer group of $Y$.
	
	Let $X$ be an irreducible smooth projective curve over $k$ of genus $g$ with $g\,\geq \,2$. Fix an 
	integer $r\,\geq \,2$ and also a line bundle $\xi$ of degree $d$ on $X$. If $g\,=\,2,$ we assume that $r 
	\,\geq\, 3$. let $M(r,d)$ denote the moduli space of stable vector bundles on $X$ of rank 
	$r$ and degree $d$; it is a smooth quasi-projective variety. Let $M_{\xi}(r,d)\, \subset\, 
	M(r,d)$ denote the moduli space of stable vector bundles on $X$ with rank $r$ and 
	determinant $\xi$. Let $\mathcal M(r,d)$ and $\mathcal M_{\xi}(r,d)$ denote the 
	corresponding moduli stack of stable vector bundles. The Brauer group $\Br( M_{\xi}(r,d))$ 
	is cyclic of order g.c.d.($r,d$) \cite{BBGN07}. A generator for $\Br( M_{\xi}(r,d))$ is 
	obtained by restricting the universal projective bundle over $X \times M_{\xi}(r,d)$ to 
	$\{x_{0} \} \times M_{\xi}(r,d)$ , where $x_{0}$ is a closed point of $X$.
	
	Parabolic vector bundles on $X$, denoted by $E_*$, were introduced by Mehta and Seshadri in 
	\cite{MS80}. These are vector bundles $E$ on $X$ with a filtration of fibers of $E$ at a 
	collection of finitely many points $S$ of $X$ and certain real numbers, called weights, 
	attached to these filtrations.
	Let $M^\alpha(r,d)$ denote the moduli
	space of rank $r$ stable parabolic vector bundles $E_*$ on $X$ whose underlying bundle $E$
	is of degree $d$ and the parabolic structure 
	at each parabolic point $p_i\,\in\, S$ is of the following type: 
	$$
	E_{p_i} \,=:\, F_{i,1} \, \supsetneq\, \cdots\,
	\, \supsetneq\, F_{i,a_i}\, \not=\, 0
	$$
	with $\dim F_{i,j}/F_{i,j+1}\, :=\, r_{i,j}$; the parabolic weight of $F_{i,j}$ is
	$\alpha_{i,j}\, \in\, \mathbb R$ with
	$$
	0\, \leq\, \alpha_{i,1}\, <\, \cdots\, <\, \alpha_{i,a_i}\, <\, 1\, .
	$$
	(see \S~2 for more details). It is known that $M^\alpha(r,d)$ is a smooth
	quasi--projective variety \cite{MS80}. Let ${M}^\alpha_\xi(r,d)$ denote the moduli space
	of stable parabolic vector bundles of rank $r$ and above parabolic type with
	determinant $\xi$, i.e., $\bigwedge^rE \,=\,\xi$. Let $\mathcal{M}^\alpha(r,d)$ and
	$\mathcal {M}^\alpha_\xi(r,d)$ denote the corresponding moduli stack of rank $r$ stable
	parabolic vector bundles with above parabolic data. 
	
	The Brauer group $\Br( M^\alpha_\xi(r,d))$ is isomorphic to the cyclic group
	${\mathbb Z}/m{\mathbb Z}$ \cite{BD11}, where
	\begin{align}\label{gcd}
		m\, =\, {\rm g.c.d.}(d\,, r\, ,r_{1,1}\, , \cdots\, , 
		r_{1,a_1}\, , \cdots\, , r_{\ell,1}\, , \cdots\, ,
		r_{\ell,a_\ell}).
	\end{align}
	Like in the case of vector bundles, the cyclic group ${\rm Br}( M^\alpha_\xi(r,d))$
	is generated by the Brauer class of the
	Brauer--Severi variety ${\mathbb P}_{x_0}$, where $\mathbb P$ is the universal
	projective bundle on $X \times  M^\alpha_\xi(r,d)$ and $x_0\, \in\, X$.
	
	Let $\mathcal N(r)$ be the moduli stack of stable $\textnormal{PGL}(r,k)$ bundles and
	$N(r)$ the 
	corresponding coarse moduli space. We have the natural morphism 
	\[
	f\,:\,\mathcal N(r) \,\longrightarrow \, N(r).
	\]
	The connected components of both $\mathcal N(r)$ and $N(r)$ are indexed by integers $i \,\in\, 
	[0,\,r-1]$; the $i$-th component of
	$\mathcal N(r)$ (respectively, $N(r)$) is
	denoted by $\mathcal N(r)_{i}$ (respectively, $N(r)_{i}$). This
	$\mathcal N(r)_{i}$ (respectively, $N(r)_{i}$) is the quotient of $\mathcal 
	M_{\xi}(r,d)$ (respectively, $M_{\xi}(r,d)$) by the group of stack of $\mu_r$-bundles $\rm 
	Bun_{\mu_r}$ ((respectively, by the subgroup $\Gamma \,\subset\, \Pic^{0}(X)$
	of $r$-torsion points), where 
	$d \,\equiv\,i\,\ (\text{mod}\ r)$ (see \cite[Remark 2.8]{GO18}). The above morphism $f$
	sends $\mathcal N(r)_{i}$ onto $N(r)_{i}$. The  Brauer groups of $\mathcal N(r)_{i}$ and $N(r)_i$
	coincide \cite{BH10}, and they fit in the exact sequence
	\begin{equation}\label{ei}
		0 \,\longrightarrow\, \frac{H^2(\Gamma, \,k^{*})}{H} \,\longrightarrow\, \Br(\mathcal N(r)_{i})\,
		\longrightarrow\, \mathbb Z/\delta\mathbb Z \,\longrightarrow\, 0\, ,
	\end{equation}
	where $H\,\subset \, H^2(\Gamma,\, k^{*}) $ is a subgroup of order
	$\delta\,=\, \text{g.c.d} (r,d)$ \cite[Theorem 1.2]{BH10}.
	
	As in the case of vector bundles, a parabolic stable $\textnormal{PGL}(r,k)$-bundle is an 
	equivalence class of parabolic stable vector bundles $[E_{*}]$, where two stable parabolic 
	vector bundles $E_*$ and $E'_*$ are equivalent if there exists a line bundle $L$ such that 
	$E'_{*}\,\cong\, E_* \otimes L$. The parabolic structure on $E'_*$ is obtained from the 
	parabolic structure of $E_*$ by the above isomorphism.
	
	Let $\mathcal {PN}^{\alpha}(r)$ be the moduli stack of stable parabolic $\textnormal{PGL}(r,k)$-bundles with full-flag parabolic structures at each parabolic 
	points of $S$, and let $PN^{\alpha}(r)$ denote the corresponding coarse moduli space. We have 
	a natural morphism
	\begin{align}\label{mapping1}
		\widetilde{g}\,:\,\mathcal {PN}^{\alpha}(r) \,\longrightarrow \, PN^{\alpha}(r). 
	\end{align}
	As before, the connected components of $\mathcal {PN}^{\alpha}(r)$ and $PN^{\alpha}(r)$ are
	indexed by integers $i \,\in\, [0,\,r-1]$; the $i$-th connected component of
	$\mathcal {PN}^{\alpha}(r)$ (respectively, $PN^{\alpha}(r)$) will be
	denoted by $\mathcal {PN}^{\alpha}(r)_{i}$ (respectively, $PN^{\alpha}(r)_{i}$).
	The scheme $PN^{\alpha}(r)_{i}$ is the quotient of $M^{\alpha}_{\xi}(r,d)$ by
	the above mentioned group $\Gamma$, while the stack $\mathcal{PN}^{\alpha}(r)_i$ is the
	quotient of the stack $\mathcal {M}^{\alpha}_{\xi}(r,d)$ by $\rm Bun_{\mu_r}$,
	where $i \,\equiv\, d\,\ (\textnormal{mod}\ r)$.
	In both cases, the action is given by parabolic tensor product with line bundles
	equipped with the trivial parabolic
	structure. The morphism $\widetilde{g}$ in \eqref{mapping1} sends
$\mathcal{PN}^{\alpha}(r)_{i}$ onto $PN^{\alpha}(r)_{i}$. 
	
	Let $PN^{\alpha,sm}(r)_i$ denote the smooth locus of $PN^\alpha(r)_i$. Our aim here is to study the Brauer group of the spaces $\mathcal {PN}^{\alpha}(r)_{i}$ and $PN^{\alpha,sm}(r)_{i}$. We prove that the two Brauer groups in question coincide, 
	and is in fact isomorphic to the kernel in \eqref{ei}.
	
	The main results are as follows:
	
	\begin{theorem}\label{thi}\mbox{}
		Let $\alpha$ be a generic parabolic weight, meaning every parabolic semistable
bundle with weight $\alpha$ is parabolic stable.
		\begin{enumerate}
			\item In the case of full flags (i.e., $r_{i,j}\,=\, 1\,\,\forall\,(i,\,j)$), the Brauer groups of $\mathcal {PN}^{\alpha}(r)_{i}$  and $PN^{\alpha,sm}(r)_{i}$ coincide.
			
\item Let $m$ be as in \eqref{gcd}. If $m$ equals 1, then
$$\textnormal{Br}(PN^{\alpha,sm}(r)_{i})\,\cong\, \frac{H^2(\Gamma,\, k^{*})}{H},$$ where
$H \,\subset\, H^2(\Gamma,\, k^{*})$ is as in \eqref{ei}.
		\end{enumerate}
	\end{theorem}
	
	Theorem \ref{thi} $(1)$ is proved by producing an open subscheme $U$ of 
	$\textnormal{PN}^{\alpha}(r)$ whose complement is of codimension at least 3 such that the map $\widetilde{g}$ defined in (\ref{mapping1}) is an isomorphism over $U$ (cf. Corollary \ref{isomorphism of brauer group for stack}). For Theorem \ref{thi} $(2)$, we first assume the parabolic weights are 
	sufficiently small, so that there exists a forgetful map from an open subscheme 
	$\mathcal{U}_\alpha\,\subset\, M^{\alpha}_{\xi}(r,d)$ to $M_{\xi}(r,d)$ by simply
	forgetting the parabolic structure. This map is an \'etale-locally trivial fibration. Next, 
	we restrict our attention over a suitably chosen open subscheme of $M^{\alpha}_{\xi}(r,d)$ 
	on which $\Gamma$ acts freely. Quotienting by $\Gamma$ then gives rise to a finite \'etale 
	morphism, from which we obtain our result using the Hochschild-Serre spectral sequence 
	corresponding to this finite \'etale map (cf. Proposition \ref{brauer group iso 1}). Finally, for arbitrary generic weights, we use a result of Thaddeus 
	\cite[\S~6, 6.2]{Th02} which says that the moduli spaces with different weights are actually birational. \\
	Finally, we discuss the case when the weights are not generic, but still satisfy condition $m=1$ where $m$ is as in (\ref{gcd}). We show that for a non-generic weight $\gamma$ which lie on a
single \textit{wall} \cite[\S~2]{BY99}, the Brauer group of the smooth locus of the corresponding moduli
of parabolic stable ${\rm PGL}(r)$-bundles, namely $\Br(PN^{\gamma,sm}(r)_i)$, is isomorphic to $\textnormal{Br}(PN^{\alpha,sm}(r)_{i})$ for any generic weight $\alpha$. This is proved in Proposition \ref{non-generic weight proposition}.

Let us also mention that in case when $k=\mathbb{C}$ and the curve $X=X'\otimes_{\mathbb{R}}\mathbb{C}$ for a geometrically smooth projective surve $X'$ of genus $g\geq 2$ over $\mathbb{R}$, then the Brauer group of the moduli of geometrically stable vector bundles of fixed rank and determinant has been computed in \cite{BHHS11}. Very recently, the Brauer group of the moduli stack (and moduli space) of geometrically stable parabolic bundles of fixed determinant over a geometrically irreducible projective curve with at most nodes as singularities has been computed in \cite{BB23}.

	\section{Preliminaries}\label{se2}
	
	Let $X$ be an irreducible smooth projective curve of genus $g\,\geq\, 2$,
	defined over an algebraically closed field $k$ of characteristic zero.
	A vector bundle on $X$ will always mean an algebraic vector bundle.
	Fix an integer $r\,\geq\, 2;$ if $g\,=\,2,$
	then set $r\,\geq\, 3$. Fix a line bundle $\xi$ on $X$ of degree $d$. By a point of $X$ we will always mean a closed point. Fix a subset
	$$S\,=\,\{p_1,\,p_2,\,\cdots,\,p_s\}\, \subset\, X$$ of $s$ distinct points; they will be referred to
	as \textit{parabolic points}.
	Let $E$ be a vector bundle of rank $r$ on $X$. The fiber of $E$ over a point $z\, \in\, X$ will be denoted by $E_z$. 
	
	\begin{definition}\label{pardef}
		A \textit{parabolic data of rank r} for points of $S$ consists of the following collection: for each $p_i\,\in \,S$
		\begin{itemize}
			\item a string of positive integers $(m^i_1,\,m^i_2,\,\cdots,\,m^i_{l_i})$ such that $\sum_{j=1}^{l_i}m^i_j \,=\, r$, and
			
			\item an increasing sequence of real numbers $0\,\leq\, \alpha^i_1\,<\,\alpha^i_2\,<\,\cdots\,<\,\alpha^i_{l_i}\,<\,1$.
		\end{itemize}
		
		If $m^i_j \,=\, 1$ for all $1\,\leq\, j\,\leq \,l_i$ and $1\,\leq \,i\,\leq \,s$, we say that it is a \textit{full-flag} parabolic data.
		
		A \textit{parabolic structure} on $E$ over $S$, with parabolic data of rank $r$ as above, consists of the following:
		for each $p_i\,\in\, S$, a weighted filtration
		\begin{align*}
			E_{p_i} \,=\, E^i_1\,\supsetneq\, E^i_2\,\supsetneq\, &\,\cdots\, \supsetneq\, E^i_{l_i}\,\supsetneq\, E^i_{l_i+1} \,=\, 0\\
			0\,\leq\, \alpha^i_1\,<\,\alpha^i_2\,<\,&\,\cdots\,<\,\alpha^i_{l_i}\,<\,\alpha^i_{l_i+1} \,=\, 1
		\end{align*}
		such that $m^i_j \,=\, \dim(E^i_j/E^i_{j+1})\,\,\,\,\forall\,\,\,1\,\leq\, j\,\leq\, l_i, \,1\,\leq \,i\,\leq\, s$.
		
		The above collection 
		$\alpha\,:=\,\{(\alpha^i_1\,<\,\alpha^i_2\,<\,\cdots\,<\,\alpha^i_{l_i})_{ 1\leq i\leq 
			s}\}$ is called the set of \textit{weights}, and the above integer $m^i_j$ is called the 
		\textit{multiplicity} of the weight $\alpha^i_j$.
		
		Let $\textbf{m} = \{(m^i_1,\cdots,m^i_{l_i})_{1\leq i\leq s}\}$. By a \textit{parabolic bundle} we mean a collection of data $(E,\, \textbf{m},\, \alpha)$, where $E$ is a vector bundle on $X$, while $\textbf{m}$ and $\alpha$ are as 
		described above. For notational convenience, such a parabolic vector
		bundle will also be referred to as $E_*$; the vector bundle
		$E$ is called the underlying bundle. A parabolic structure with	a full-flag parabolic data is called a full-flag parabolic structure.
	\end{definition}
	
	\begin{definition}
		Let $E_*$ be a parabolic bundle on $X$ of rank $r$. The \textit{parabolic degree} of $E_*$ is defined as 
		\[\textnormal{pardeg}(E_*)\,:=\, deg(E)+\sum_{i=1}^{s}\sum_{j=1}^{l_i}m^i_j\alpha^i_j, \] 
		and the \textit{parabolic slope} of $E_*$ is defined as 
		\[\textnormal{par}\mu(E_*)\,:=\, \dfrac{\textnormal{pardeg}(E_*)}{r}.\]
		The parabolic bundle $E_*$ is called \textit{parabolic semistable} (respectively,
		\textit{parabolic stable}) if for every proper sub-bundle $0\, \not=\, F\, \subset\, E$ we have 
		\[
		\textnormal{par}\mu(F_*)\,\leq\, \textnormal{par}\mu(E_*)\ \ \, (\text{respectively, }\
		\textnormal{par}\mu(F_*)\, <\, \textnormal{par}\mu(E_*)),\]
		where $F_*$ denotes the parabolic bundle defined by $F$ equipped with the induced parabolic structure from $E_*$ (see \cite{MS80} for
		the details).
	\end{definition}

See \cite{Ses}, \cite{MS80} for homomorphisms of parabolic bundles. The class of parabolic semistable bundles 
with fixed parabolic slope forms an abelian category. We refer to \cite[p.~ 68]{Ses} for further details. In 
particular, the direct sum of parabolic semistable bundles with the same parabolic 
slope is again semistable of the same slope.

\begin{definition}
A system of weights $\alpha$ is called \textit{generic} if every semistable parabolic bundle $E_*$, of given rank and
degree, with weights $\alpha$ is actually parabolic stable. We refer to \cite{BY99} for more details.
\end{definition}
	
	\subsection{Parabolic push-forward and pull-back}\label{parabolic pushforward}
	
	Let $X$ and $Y$ be two irreducible smooth projective curves, and let $\gamma\,:\, 
	Y\,\longrightarrow\, X$ be a finite \'etale Galois morphism. If $F$ is a vector bundle on 
	$Y$ of rank $n$, then $\gamma_*F$ is a vector bundle on $X$ of rank $mn$, where $m$ is the 
	degree of the map $\gamma$. Given a parabolic structure on $F$, there is a natural way to 
	construct a parabolic structure on $\gamma_*F$. We refer to \cite[\S~3]{BM19}
for details (a more general construction can be found in \cite{AB}).
	
	Let us mention a special case of parabolic push-forward which will be used here. Let $p\,\in\, X$ be a point. Let $m$ be the degree of $\gamma$. Since $\gamma$ is unramified, the inverse image $\gamma^{-1}(p)$ consists of $m$ distinct points of $Y$.  Suppose we are given a full-flag parabolic data of rank $n$ with $\gamma^{-1}(p)$ as the set of parabolic points, so that 
	the parabolic data looks like
	\begin{align*}
		q\,\in\, \gamma^{-1}(p),\,\,F_q \,=\, F^q_1\,&\supsetneq\, F^q_2\,\supsetneq \,\cdots\,\supsetneq\, F^q_n
		\,\supsetneq\, F^q_{n+1}\,=\,0,\\
		\alpha^q_1\,&<\,\alpha^q_2\,<\,\cdots\cdots<\,\alpha^q_n\,<\,\alpha^q_{n+1}\,=\,1.
	\end{align*}
	Moreover, we assume that in the collection $\{\alpha^q_j\,\mid\,\,q\,\in\,\gamma^{-1}(p),\, \,1\,\leq\, j\,\leq\, n\}$
	all numbers are distinct. Let $F_*$ be a parabolic vector bundle on $Y$ with this parabolic data. 
	We shall construct a full-flag parabolic structure for $E\,=\,\gamma_*F$ at the point $p$ from this data. Note that
	\[E_p\,=\,\bigoplus_{q\,\in\, \gamma^{-1}(p)}F_{q}.\]
	Let $\beta^p_1\,<\,\beta^p_2\,<\,\cdots\,<\,\beta^p_{mn}$ be the increasing sequence of length $mn$
	obtained by ordering the numbers $\{\alpha^q_j\,\mid\,\,q\,\in\,\gamma^{-1}(p),\,1\,\leq\, j\,\leq\, n\}$. For each integer
	$1\,\leq\, k\,\leq\, mn$, define a filtration of $E_p$ by the subspaces
	$$E^p_k \,:=\, \bigoplus_{q\in\gamma^{-1}(p)}F^q_{\omega(q,k)},$$
	where $\omega(q,k)$, for each point $q$, is the smallest integer $1\,\leq\, j(q)\,\leq\, n$ satisfying the
	condition $\beta^p_k\,\leq\, \alpha^q_{j(q)}.$
	It is straight-forward to see that $\dim(E^p_k/E^p_{k+1}) \,=\, 1\,\ \,\forall\ \,1\,\leq\,
	k\, \leq\, mn-1$ and $\dim E^p_{mn}\,=\, 1$. Consequently,
	\begin{align*}
		E_p \,=\, E^p_1&\,\supsetneq\, E^p_2\,\supsetneq\,\cdots\,\supsetneq\, E^p_{mn}\\
		\beta^p_1&\,<\,\beta^p_2\,<\,\cdots\cdots\,<\,\beta^p_{mn}
	\end{align*}
	is a full-flag parabolic structure of rank $mn$ at $p$.
	
	Finally, for multiple parabolic points on $X$, we perform exactly the same construction 
	for each parabolic point to define the parabolic push-forward.
	
	Let $\gamma\,:\,Y\,\longrightarrow\, X$ be as above. If $E_*$ is a parabolic bundle on $X$, then $\gamma^*E$ has an induced parabolic structure as follows: let $S\subset X$ be the set of parabolic points for $E$; we define $\gamma^{-1}(S)$ as the set of parabolic points for $\gamma^*E$, and for any $p\in S$ and $q\in \gamma^{-1}(p)$, give the fiber $(\gamma^*E)_q=E_p$ the same parabolic structure as $E_p$. 
	
	\subsection{Moduli of parabolic ${\rm PGL}(n,k)$-bundles}
	
	\begin{definition}
		A parabolic $\textnormal{PGL}(n,k)$-bundle is an equivalence class of parabolic vector bundles, where two parabolic bundles $E_*$ and $E'_*$ 
		are considered equivalent if there exists a line bundle $L$ such that $E'_*\,\simeq\, E_*\otimes L$ as parabolic bundles.
	\end{definition}
	
	Let $\mathcal{M}^{\alpha}_{\xi}(r,d)$ denote the moduli stack of stable parabolic bundles on $X$ of
rank $r$ and fixed determinant
	$\xi$ of degree $d$ with parabolic data $\alpha$; the corresponding coarse moduli space will be denoted by
	$M^{\alpha}_{\xi}(r,d)$. The natural morphism 
	\[
	\rho\,:\, \,\mathcal{M}^{\alpha}_{\xi}(r,d)\,\longrightarrow\, M^{\alpha}_{\xi}(r,d)
	\]
	is a $\mathbb G_m$-gerbe; this follows from the fact that the automorphisms of
	a stable parabolic bundle are the nonzero scalar multiplications.
	
	Let
	$$\Gamma\,\subset\, {\rm Pic}^0(X)$$
	be the group of isomorphism classes of line bundles on $X$ of order $r$; in other words, $\Gamma$ is the
	$r$--torsion points of the Jacobian of $X$. This group $\Gamma$ acts naturally
	on $PN^{\alpha}_{\xi}(r,d)$ by parabolic tensor product. More precisely, the action
	of any $L\,\in\, \Gamma$ sends any $E_*\,\in\, PN^{\alpha}_{\xi}(r,d)$ to the parabolic
	tensor product
	\begin{equation}\label{eac}
		L\cdot (E_*) \,:=\, E_*\otimes L,
	\end{equation}
	where $L$ has the trivial parabolic structure.
	
	Let ${\rm Bun}_{\mu_r}$ denote the group stack of $\mu_r$-bundles on $X$. The natural map 
	$$h\,:\,{\rm Bun}_{\mu_r}\,\longrightarrow \,\Gamma$$ is a $\mu_r$-gerbe. The tensor product
	operation  defines an action of the stack $\rm Bun_{\mu_r}$ on
	$\mathcal{M}^{\alpha}_{\xi}(r,d)$. This action is
	clearly compatible with the $\Gamma$-action on $M^{\alpha}_{\xi}(r,d)$, 
	meaning the following diagram commutes:
	\[\xymatrix{
		\rm Bun_{\mu_r}\times \mathcal{M}^{\alpha}_{\xi}(r,d) \ar[r]\ar[d]_{h\times\rho}
		& \mathcal{M}^{\alpha}_{\xi}(r,d) \ar[d]^{\rho} \\
		\Gamma\times M^{\alpha}_{\xi}(r,d) \ar[r] & M^{\alpha}_{\xi}(r,d)
	} \] 
	Let $i\,\equiv\, d\ \,(\textnormal{mod r})$\, with $i\,\in\, [0,\,d-1]$. The moduli stack of 
	parabolic stable $\textnormal{PGL}(r,k)$-bundles on $X$ of topological type $i$, denoted 
	by $\mathcal{PN}^{\alpha}(r)_i$, is defined to be the quotient of 
	$\mathcal{M}^{\alpha}_{\xi}(r,d)$ by the action of $\rm Bun_{\mu_r}$.\\ The moduli space 
	of parabolic stable $\textnormal{PGL}(r,k)$-bundles on $X$ of topological type $i$ is 
	defined similarly as the quotient of $M^{\alpha}_{\xi}(r,d)$ by $\Gamma$:
	\[PN^{\alpha}(r)_i \,:=\, M^{\alpha}_{\xi}(r,d)/\Gamma\,.  \] 
	We have the following commutative diagram 
	\[
	\xymatrix{
		\mathcal{M}^{\alpha}_{\xi}(r,d) \ar[r]^{\rho} \ar[d] & M^{\alpha}_{\xi}(r,d) \ar[d] \\
		\mathcal {PN}^{\alpha}(r)_i \ar[r]^{\overline\rho} & PN^{\alpha}(r)_i.
	}	
	\]	
	See \cite[\S~2]{GO18} for more details.
	
	\section{The fixed point loci}
	
	We fix a positive integer $r$, a subset of parabolic points of $X$
	and a line bundle $\xi$ of degree $d$ on $X$. Assume that $i\,\equiv\, d\,\ (\text{mod\, r}),\,i\,\in\,[0,\,d-1]$ in what follows. We adopt the following notations:
	\begin{align*}
		\text{M}_{\xi}^{ss}  := &\,\,\text{coarse moduli space of semistable vector bundles of rank}\,\, r\,\,\text{and determinant iso-}\\
		&\text{morphic to}\,\,\xi.	\\
		\Mxi := &\,\,\text{moduli space of stable vector bundles of rank}\,\, r\,\,\text{and determinant isomorphic to}\,\,\xi.	\\
		\text{N}(r)_i := &\,\,\text{moduli space of stable}\,\,\text{PGL}(r,k)\text{--bundles of rank}\,\,r\,\,\text{and topological type} \,\,i,\\
		\Malpha := &\,\,\text{moduli space of stable parabolic vector bundles of rank}\,\,r,\,\,\text{fixed determinant}\,\,\xi\\
		&\,\,\text{with full flag and weights}\,\,\alpha.\\
		\text{PN}_{\alpha,i}:= &\,\,\text{moduli space of stable parabolic}\,\text{PGL}(r,k)\text{--bundles of rank}\,\,r,\,\,\text{topological type}\\
		&\,i\,\,\text{with full flag and weights}\,\,\alpha.\\
		\mathcal{PN}_{\alpha,i} := &\,\,\text{moduli stack of stable parabolic}\,\text{PGL}(r,k)\text{--bundles of rank}\,\,r,\,\,\text{topological type}\\
		&\,i\,\,\text{with full flag and weights}\,\,\alpha.\\
		\Gamma := &\,\,\{L\,\in \,\text{Pic}^0(X)\,\,\mid\,\,L^r\,\simeq\, \mathcal{O}_X\}\simeq (\mathbb{Z}/r\mathbb{Z})^{2g}.
	\end{align*}
	The aim in this section is to estimate the codimension of the fixed point locus of $\Malpha$
	under any $r$-torsion line bundle $L$. This would ensure the existence of a closed subscheme
	of $\Malpha$ (namely, the union of the fixed point locus for all $L\,\in\, \Gamma\setminus\{\mathcal{O}_X\}$) of codimension at least three such that $\Gamma$ acts freely on its complement. We remark that $\alpha$ need not be a generic weight.
	We first prove a result in linear algebra which will be used.
	
	\begin{lemma}\label{linear algebra result}
		Let $\psi$ be a diagonalizable automorphism of a
		$k$-vector space $V$ of dimension $r$ equipped with a filtration of subspaces
		\[V\,=\,V_r\,\supsetneq\, V_{r-1}\,\supsetneq\, V_{r-2}\,\supsetneq\, \cdots\,
		\supsetneq\, V_1\,\supsetneq\, 0\]
		such that $\psi(V_i)\,=\, V_i$ for all $1\, \leq\, i\, \leq\, r$.
		Then there exists a basis of $V$ consisting of eigenvectors
		$\{v_1,\,v_2,\,\cdots,\,v_r\}$ of $\psi$ such that
		$V_j\,=\,\langle v_1,\,\cdots,\, v_j\rangle$ for all $1\, \leq\, j\, \leq\, r$.
	\end{lemma}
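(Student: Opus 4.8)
The plan is to prove this by induction on the dimension $r$ of $V$, building the eigenbasis from the top of the filtration downward. The key observation is that $\psi$ is diagonalizable and preserves every step of the filtration, so each quotient $V_i/V_{i-1}$ inherits an induced automorphism that is itself diagonalizable.

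First I would set up the inductive step. Consider the one-dimensional quotient $V_r/V_{r-1}$. Since $\psi$ preserves both $V_r = V$ and $V_{r-1}$, it induces an automorphism $\overline{\psi}$ on this quotient, which acts by some scalar (an eigenvalue of $\psi$). The crucial point is to lift a generator of this quotient to an actual eigenvector of $\psi$ lying in $V$ but not in $V_{r-1}$. Because $\psi$ is diagonalizable on all of $V$, the space $V$ decomposes as a direct sum of eigenspaces, and I would argue that the restriction of $\psi$ to the $\psi$-invariant subspace $V_{r-1}$ is again diagonalizable (a diagonalizable operator restricted to an invariant subspace remains diagonalizable, since its minimal polynomial divides that of $\psi$, hence has distinct roots). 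This lets me choose an eigenvector $v_r \in V$ whose image in $V_r/V_{r-1}$ is nonzero, so that $V = V_{r-1} \oplus \langle v_r \rangle$ as $\psi$-modules.

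The inductive hypothesis, applied to the diagonalizable automorphism $\psi|_{V_{r-1}}$ together with the induced filtration $V_{r-1} \supsetneq V_{r-2} \supsetneq \cdots \supsetneq V_1 \supsetneq 0$, then produces eigenvectors $v_1, \ldots, v_{r-1}$ with $V_j = \langle v_1, \ldots, v_j \rangle$ for each $1 \leq j \leq r-1$. Adjoining $v_r$ gives the full eigenbasis, and since $V_r = V_{r-1} \oplus \langle v_r \rangle = \langle v_1, \ldots, v_r \rangle$, the required compatibility $V_j = \langle v_1, \ldots, v_j \rangle$ holds for all $j$. The base case $r = 1$ is trivial, as any nonzero vector of the one-dimensional space $V$ is an eigenvector.

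The main obstacle I anticipate is the step asserting that the restriction of a diagonalizable operator to an invariant subspace is still diagonalizable, and more precisely that one can select an eigenvector completing $V_{r-1}$ to $V$. The cleanest way to handle this is to fix at the outset an eigenspace decomposition $V = \bigoplus_\lambda E_\lambda$ for $\psi$, and to use that each $\psi$-invariant subspace $W$ (in particular each $V_i$) satisfies $W = \bigoplus_\lambda (W \cap E_\lambda)$. This intersection formula is what makes everything work: it simultaneously shows that each $V_i$ has a basis of eigenvectors and that one can extend a chosen eigenbasis of $V_{i-1}$ to one of $V_i$ by picking a single eigenvector from the appropriate $E_\lambda$. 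With this fact in hand the induction is routine, so I would state and prove the intersection formula as the technical heart of the argument.
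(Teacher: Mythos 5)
Your proposal is correct and is essentially the paper's own argument: both proofs split off a single eigenvector at each codimension-one step of the filtration, using the fact that a diagonalizable operator restricted to an invariant subspace remains diagonalizable and admits invariant complements (your intersection formula $W=\bigoplus_\lambda (W\cap E_\lambda)$ is precisely the concrete form of the semisimplicity the paper invokes). The only cosmetic difference is the direction of the induction: you peel off $v_r$ at the top via $V=V_{r-1}\oplus\langle v_r\rangle$ and recurse downward, whereas the paper builds the basis bottom-up, starting from $v_1\in V_1$ and extending one step at a time.
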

	
	\begin{proof}
		Since $\psi$ is diagonalizable, for any subspace $W\, \subset\, V$ such that
		$\psi(W)\,=\, W$, the restriction of $\psi$ to $W$ is diagonalizable and, moreover, since diagonalizable maps are semisimple, there
		is a subspace $W'\, \subset\, V$ satisfying the conditions $\psi(W')\,=\, W'$ and
		$V\,=\, W\bigoplus W'$. Choose any basis vector $v_1$ for $V_1$. Suppose $\{v_1,v_2,\cdots,v_j\}$ has been chosen satisfying the hypothesis up to $j\leq r-1$. Thus there exists a vector $v_{j+1}\in V_{j+1}$ such that $V_{j+1}=\langle v_{j+1}\rangle \bigoplus V_j$ and $v_{j+1}$ is an eigenvector of $\psi$. Then $\{v_1,\cdots,v_{j+1}\}$ satisfy the hypothesis till $(j+1)$. Repeating this process, the lemma follows.
	\end{proof}
	
	For $L\,\in \,\Gamma\setminus\mathcal{O}_X$, let
\begin{equation}\label{efp}
\MLxi\,\subset\,\Mxi\ \ \ {\rm (respectively,}\ \, \MLalpha\,\subset\,\Malpha{\rm )}
\end{equation}
be the locus of fixed points for the action of $L$
	on $\Mxi$ (respectively, $\Malpha$); see \eqref{eac}. 
	If $m\,=\,\text{ord}(L)$, choosing a nonzero section $s_0\, \in\, H^0(X,\, L^{\otimes m})$,
	define the spectral curve
	$$
	Y_L\, :=\, \{v\, \in\, L\,\, \mid\,\, v^{\otimes m}\, \in\, s_0(X)\}\, .
	$$
	The natural projection $\gamma\,:\,Y_L\,\longrightarrow\, X$ is an \'etale Galois covering
	with Galois group ${\mathbb Z}/m{\mathbb Z}$. The isomorphism class of this
	covering $\gamma$ does not depend on the choice of the section $s_0$.
	Let $N_L\,\subset\, M_{Y_L}(r/m,d)$ denote the moduli space of stable vector bundles $F$ over	$Y_L$ of rank $r/m$ and degree $d$ such that $\det (\gamma_*F)\,\simeq\, \xi$. We recall the following:

	\begin{lemma}[\text{\cite[Lemma 2.1]{BH10}}]
		There is a nonempty Zariski open subset $U\,\subset\, N_L$ such that $\gamma_*F\,\in\, \MLxi$
		for all $F\,\in\, U$. Moreover, the morphism $U\,\longrightarrow\, \MLxi$ defined by
		$F\,\longmapsto \,\gamma_*F$ is surjective. To be more precise, $U$ is the subset of
		$N_L$ consisting of those bundles $F$ such that $\gamma_*F$ is also stable.
	\end{lemma}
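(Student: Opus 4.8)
The plan is to exhibit $\MLxi$ as the image of an open subset $U\subset N_L$ under the pushforward $F\mapsto\gamma_*F$, using in an essential way that the spectral cover trivializes $L$. Since $Y_L$ carries a tautological nowhere-vanishing section of $\gamma^*L$, we have $\gamma^*L\cong\mathcal O_{Y_L}$; fix a generator $\sigma$ of the Galois group $\mathbb Z/m\mathbb Z$. First I would record why the image automatically lands in the fixed locus: for any $F$ on $Y_L$ the projection formula gives $\gamma_*F\otimes L\cong\gamma_*(F\otimes\gamma^*L)\cong\gamma_*F$. Because $\gamma$ is \'etale of degree $m$ and $\operatorname{rk}F=r/m$, the bundle $\gamma_*F$ has rank $r$ and degree $\deg F=d$ (by \'etaleness), and the defining condition of $N_L$ says exactly that $\det(\gamma_*F)\cong\xi$. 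Thus as soon as $\gamma_*F$ is stable it lies in $\MLxi$. I would then define $U:=\{F\in N_L\mid\gamma_*F\text{ is stable}\}$; since stability is an open condition in families and $\gamma_*$ applied to a (local) universal family on $Y_L\times N_L$ yields a family of rank-$r$ bundles on $X$ over $N_L$, the set $U$ is open, and this already gives the ``more precise'' description in the statement.

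Next I would establish the stability comparison that controls both inclusions. For any subsheaf $F'\subseteq F$ one has $\gamma_*F'\subseteq\gamma_*F$ with $\mu(\gamma_*F')=\mu(F')/m$, so pushforward scales slopes by the fixed factor $1/m$ and preserves the strict inequalities defining stability; in particular $\gamma_*F$ stable forces $F$ stable, so each $F\in U$ genuinely lies in $N_L$. In the reverse direction, from $\gamma^*\gamma_*F\cong\bigoplus_{i=0}^{m-1}(\sigma^i)^*F$ one sees that a stable $F$ with $(\sigma^i)^*F\not\cong F$ for $1\le i\le m-1$ has $\gamma_*F$ stable. The locus in $N_L$ where some nontrivial $(\sigma^i)^*F\cong F$ is a proper closed subset, so its complement is a nonempty open subset contained in $U$, proving $U\neq\emptyset$.

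For surjectivity I would start from $E\in\MLxi$, i.e.\ a stable bundle with $\det E=\xi$ together with an isomorphism $\phi:E\xrightarrow{\sim}E\otimes L$. Fixing a trivialization $L^{\otimes m}\cong\mathcal O_X$, the $m$-fold composite $E\xrightarrow{\sim}E\otimes L^{\otimes m}\cong E$ is an automorphism of a stable bundle, hence a nonzero scalar by Schur's lemma; rescaling $\phi$ by a suitable $m$-th root of unity, I may assume this composite is the identity. This turns $\phi$ into a genuine descent datum, i.e.\ a module structure over $\gamma_*\mathcal O_{Y_L}\cong\bigoplus_{j=0}^{m-1}L^{-j}$, and under the equivalence between coherent sheaves on $Y_L$ and $\gamma_*\mathcal O_{Y_L}$-modules on $X$ it produces a bundle $F$ on $Y_L$ with $\gamma_*F\cong E$. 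The invariants computed above place $F$ in $N_L$, and the stability comparison gives $F$ stable, so $F\in U$; hence $F\mapsto\gamma_*F$ is surjective onto $\MLxi$.

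The main obstacle is the normalization-and-descent step: promoting the bare isomorphism $\phi$ to an honest $\mathbb Z/m$-action (equivalently a $\gamma_*\mathcal O_{Y_L}$-module structure) is what requires Schur's lemma to pin down $\phi^{\circ m}$ as a scalar, and one must then verify that the reconstructed $F$ carries the correct rank, degree and $\det(\gamma_*F)$; the slope-scaling identity $\mu(\gamma_*\,\cdot\,)=\mu(\,\cdot\,)/m$ is precisely what makes the stability bookkeeping on both sides go through cleanly.
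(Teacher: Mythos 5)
Your argument is correct, and for the key step it takes a genuinely different route from the one the paper relies on. The paper does not reprove this lemma: it cites \cite[Lemma 2.1]{BH10}, and recalls that proof's essential mechanism inside the proof of its parabolic analogue (Lemma \ref{parabolic fixed point}). That mechanism is: after the same Schur-lemma rescaling you perform, pull $\varphi$ back to $Y_L$, compose with the tautological trivialization of $\gamma^*L$ to get an endomorphism $\phi$ of $\gamma^*E$, use irreducibility of $Y_L$ to see that the characteristic polynomial of $\phi_y$ is independent of $y$, decompose $\gamma^*E$ into generalized eigensubbundles, and take $F$ to be one of them. You instead never leave $X$: the rescaled $\varphi$ makes $E$ a module over the $\mathcal{O}_X$-algebra $\gamma_*\mathcal{O}_{Y_L}\cong\bigoplus_{j=0}^{m-1}L^{-j}$, and the spectral correspondence for the affine morphism $\gamma$ (coherent sheaves on $Y_L$ $=$ $\gamma_*\mathcal{O}_{Y_L}$-modules on $X$) produces $F$ with $\gamma_*F\cong E$ in one stroke. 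Both routes hinge on the identical normalization of $\varphi^{\circ m}$; yours is cleaner and more intrinsic, while the eigensubbundle route of \cite{BH10} produces extra concrete data --- the eigenspace decomposition of each fiber $E_p$ --- which is exactly what this paper needs afterwards (via Lemma \ref{linear algebra result}) to equip $F$ with a parabolic structure. So your argument proves the lemma as stated, but would need to be supplemented by that decomposition to serve the paper's later purposes. Your slope bookkeeping ($\mu(\gamma_*F')=\mu(F')/m$, $\deg\gamma_*F=\deg F$ since $\deg\gamma_*\mathcal{O}_{Y_L}=0$) and the stability transfer in both directions are sound.

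Two small repairs. First, you cannot in general fix the $m$-fold composite by rescaling $\varphi$ with a root of \emph{unity}: if the composite is $\lambda\cdot(\mathrm{Id}_E\otimes\tau)$, you must replace $\varphi$ by $c\varphi$ with $c^m=\lambda^{-1}$, which exists since $k$ is algebraically closed. Second, your nonemptiness argument asserts that $\{F\in N_L \,:\, (\sigma^i)^*F\cong F \text{ for some } 1\le i\le m-1\}$ is a \emph{proper} closed subset; closedness is standard, but properness is not justified --- it requires a dimension count (such $F$ descend, up to the usual twisting, to an intermediate quotient of $Y_L$, giving a lower-dimensional locus), or else nonemptiness of $U$ should instead be deduced from your surjectivity statement together with the (classical, but also unproven here) nonemptiness of $M^L_\xi$. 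As written, this is the one genuine, though minor and fixable, gap.
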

	
	\subsection{Estimate of codimension of fixed points}
	
	Given a set $\alpha\,=\,\{\alpha_1,\,\alpha_2,\,\cdots,\, \alpha_r\}$ of $r$ distinct 
	elements, let $P(\alpha)$ denote the set of all possible partitions of its elements into 
	$m$ subsets, each containing $n\,=\,r/m$ elements. Clearly, we have $|P(\alpha)|\,=\,
	{r\choose n}{r-n \choose n}{r-2n\choose n}\cdots {n\choose n}.$
	
	Consider the spectral curve $\gamma\,:\, Y_L\,\longrightarrow \,X$. Given a full-flag 
	parabolic data of rank $r$ at the parabolic points $S$ of $X$, we would like to describe a 
	full-flag parabolic data of rank $n$ on $\gamma^{-1}(S)$ for each element of $P(\alpha)$. 
	For simplicity, let us start with the case of a single parabolic point $S\,=\,\{p\}$. So, 
	we are given a full-flag parabolic data of rank $r$ at $p$. Let $\alpha$ denote its set of 
	weights. Let $$\textnormal{Gal}(\gamma)\,=\,\{1, 
	\,\mu,\,\mu^2,\,\cdots,\,\mu^{m-1}\}\,\subset \,k^*.$$ The action of 
	$\textnormal{Gal}(\gamma)$ on $\gamma^{-1}(p)$ is via multiplication. Using this, we can 
	define a full-flag parabolic data at the points of $\gamma^{-1}(p)$ as follows: let us fix 
	an ordering on the points of $\gamma^{-1}(p)$, say 
	$\gamma^{-1}(p)\,=\,\{q_1,\,q_2,\,\cdots,\,q_m\}$, such that $\mu^i$ acts on 
	$\gamma^{-1}(p)$ as the cyclic permutation sending $q_j$ to $q_{j+i}$\,, where the 
	subscript $(j+i)$ is to be understood mod $m$. For $t\,\in\, P(\alpha)$, suppose \[\alpha 
	\,=\, \coprod_{j=1}^{m} \Lambda_j \] be the partition of the set of weights $\alpha$ 
	according to $t$. Clearly each $\Lambda_j$ can be arranged into an increasing sequence of 
	length $n$. We designate $\Lambda_j$ as the set of weights at $q_j$ for each $1\,\leq\, 
	j\,\leq\, m$. This gives a full-flag parabolic data of rank $n$ at the points of 
	$\gamma^{-1}(p)$. Finally, for multiple parabolic points, say $S\,= 
	\,\{p_1,\,\cdots,\,p_s\}$ and $\gamma^{-1}(p_i)\,=\,\{q_{1i},\,q_{2i},\,\cdots,q_{mi}\}$, 
	then we perform the same procedure as above for each $p_i$ to get the parabolic structure 
	upstairs. In particular, the number of possible parabolic data upstairs would be 
	$|P(\alpha)|\, = \, ({r\choose n}{r-n \choose n}{r-2n\choose n}\cdots {n\choose n})^s.$
	
	For each $t\,\in\, P(\alpha)$, let $M^t_{Y_L}(n,d)$ denote the moduli space of stable parabolic 
	bundles over $Y_L$ of rank $n$, degree $d$ and having full-flag parabolic structures 
	at the points of $\gamma^{-1}(p)$ according to $t$ as described above. Let 
	$\mathcal{N}^t_L\,\subset\, M_{Y_L}^{t}(n,d)$ denote the subvariety consisting of stable
	parabolic bundles $F_*$ such that $\det(\gamma_*F)\,\simeq\, \xi$. Define 
	$\mathcal{N}_L\,:=\,\underset{t\in P(\alpha)}{\coprod}\mathcal{N}^t_L.$
	
	\begin{lemma}\label{parabolic fixed point}
		There is a surjective morphism $f\,:\,\mathcal{N}_L\,\longrightarrow\, \MLalpha$ given by
		parabolic push-forward for the \'etale map $\gamma$.
	\end{lemma}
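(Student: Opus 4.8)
The plan is to show that parabolic push-forward along $\gamma$ carries a bundle in $\mathcal{N}_L$ to an $L$-fixed stable parabolic bundle of the prescribed type, and that every point of $\MLalpha$ arises this way. First I would check that $f$ is well defined. Fix $t\in P(\alpha)$ and $F_*\in\mathcal{N}_L^t$. By the construction recalled in \S\ref{parabolic pushforward}, the parabolic push-forward $\gamma_*F_*$ carries a full-flag parabolic structure of rank $r=mn$ at each $p_i$, whose set of weights is the union $\coprod_{j=1}^m\Lambda_j=\alpha$; thus $\gamma_*F_*$ has exactly the full-flag type and weights $\alpha$. The determinant condition $\det(\gamma_*F)\cong\xi$ holds by the definition of $\mathcal{N}_L^t$. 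To see that $\gamma_*F_*$ is $L$-fixed, recall that the tautological section trivializes $\gamma^*L\cong\mathcal{O}_{Y_L}$; the projection formula then gives $\gamma_*F\otimes L\cong\gamma_*(F\otimes\gamma^*L)\cong\gamma_*F$, and one checks that this isomorphism is compatible with the assigned weights (the Galois group permutes the points of $\gamma^{-1}(p_i)$ together with their weight sets), so $\gamma_*F_*\otimes L\cong\gamma_*F_*$ as parabolic bundles. Restricting, as in \cite[Lemma 2.1]{BH10}, to the open locus where $\gamma_*F_*$ is stable (nonempty for each $t$), we obtain $f(F_*)\in\MLalpha$.

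For surjectivity I would take $E_*\in\MLalpha$ and reconstruct a preimage. Since $E_*$ is stable it is simple, so the $L$-fixedness yields an isomorphism $\psi\colon E\otimes L\xrightarrow{\sim}E$ of parabolic bundles, unique up to scalar; using $L^{\otimes m}\cong\mathcal{O}_X$ (with $s_0$ nowhere vanishing, as $\gamma$ is \'etale) I rescale $\psi$ so that its $m$-fold iterate is the identity. This turns $E$ into a module over $\gamma_*\mathcal{O}_{Y_L}\cong\bigoplus_{j=0}^{m-1}L^{-j}$, whence $E\cong\gamma_*F$ for a rank-$n$ bundle $F$ on $Y_L$ with $\det(\gamma_*F)\cong\xi$, exactly as in the non-parabolic case \cite[Lemma 2.1]{BH10}. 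At each $p_i$ the map $\psi$ induces a diagonalizable automorphism $\psi_{p_i}$ of $E_{p_i}$ whose eigenvalues are the $m$ distinct elements of $\mathrm{Gal}(\gamma)\subset k^*$ and which preserves the full flag of $E_*$ at $p_i$. Applying Lemma \ref{linear algebra result} to $\psi_{p_i}$ produces an eigenbasis adapted to the flag; the eigenspace decomposition is precisely $E_{p_i}=\bigoplus_{q\in\gamma^{-1}(p_i)}F_q$, and recording which weight $\alpha^{p_i}_k$ lands in which eigenspace yields a partition $t\in P(\alpha)$ together with a full-flag parabolic structure on $F$ at $\gamma^{-1}(p_i)$. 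The resulting parabolic bundle $F_*$ lies in $\mathcal{N}_L^t$ (its push-forward is the stable bundle $E_*$, which forces $F_*$ to be stable since push-forward along an \'etale map scales parabolic slopes uniformly, and the determinant condition holds), and $\gamma_*F_*\cong E_*$. Hence $f$ is surjective.

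I expect the main obstacle to be the bookkeeping of weights at the parabolic points: namely, verifying that the $L$-fixed isomorphism $\psi$ genuinely respects the parabolic filtration, that the induced $\psi_{p_i}$ is diagonalizable with eigenvalues in $\mathrm{Gal}(\gamma)$, and that the eigenspace decomposition supplied by Lemma \ref{linear algebra result} matches the fiberwise splitting $E_{p_i}=\bigoplus_q F_q$ with the weights distributed exactly according to some $t\in P(\alpha)$. This is the step where the parabolic data interacts nontrivially with the Galois action, and it is what upgrades the purely vector-bundle statement \cite[Lemma 2.1]{BH10} to its parabolic counterpart; the stability comparison in both directions is routine once the descent $E\cong\gamma_*F$ is in place.
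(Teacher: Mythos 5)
The gap is in the well-definedness of $f$ on all of $\mathcal{N}_L$. You only define the map after ``restricting, as in \cite[Lemma 2.1]{BH10}, to the open locus where $\gamma_*F_*$ is stable,'' asserting nonemptiness of that locus without argument. But the lemma claims a morphism defined on the whole of $\mathcal{N}_L$, and this is exactly the point where the parabolic situation differs from the non-parabolic one of \cite{BH10}: here stability of the push-forward is \emph{automatic}, and the paper spends the first half of its proof establishing this. The idea you are missing is the following. Since the weights assigned to the various points of $\gamma^{-1}(p_i)$ are collectively distinct (they partition the $r$ distinct numbers $\alpha$) and parabolic isomorphisms preserve weights, the stable parabolic bundles $\sigma^*F_*$, $\sigma\in\mathrm{Gal}(\gamma)$, are pairwise non-isomorphic. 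Now $\gamma^*(\gamma_*F_*)\cong\bigoplus_{\sigma}\sigma^*F_*$ is a direct sum of stable parabolic bundles of equal slope, hence semistable, so $\gamma_*F_*$ is semistable; and if $E'_*\subset\gamma_*F_*$ were a proper sub-bundle of equal parabolic slope, then $\gamma^*E'_*$ would contain a stable sub-bundle of that slope, which by the pairwise non-isomorphism must be one of the $\sigma^*F_*$; Galois-equivariance of $\gamma^*E'_*$ then forces $\gamma^*E'_*=\gamma^*(\gamma_*F_*)$, i.e.\ $E'_*=\gamma_*F_*$. Without this argument, what you prove is a strictly weaker statement: a surjection onto $\MLalpha$ from some open subset of $\mathcal{N}_L$ (which would incidentally still suffice for the codimension estimate following the lemma, but is not the lemma as stated).

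Your surjectivity argument, by contrast, is essentially the paper's: rescale $\varphi$ so that its $m$-fold iterate is $\mathrm{Id}_E\otimes\tau$, descend to a rank-$n$ bundle $F$ on $Y_L$ (the paper decomposes $\gamma^*E$ into eigenspace sub-bundles rather than invoking the $\gamma_*\mathcal{O}_{Y_L}$-module structure, but these are the same mechanism), apply Lemma \ref{linear algebra result} fiberwise to distribute the flag and the weights over the decomposition $E_{p_i}=\bigoplus_{q}F_q$, thereby obtaining the partition $t\in P(\alpha)$, and deduce stability of $F_*$ from stability of $E_*$ by comparing parabolic degrees and ranks under push-forward. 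One stylistic difference worth noting: rather than asserting that the induced fiber automorphism is diagonalizable with eigenvalues in $\mathrm{Gal}(\gamma)$ --- which is true, since after your rescaling it satisfies $\psi_{p_i}^m=\mathrm{id}$, but this is precisely the verification you defer --- the paper sidesteps the issue by applying Lemma \ref{linear algebra result} to the semisimple part of the Jordan--Chevalley decomposition of the fiber map, which preserves the flag because it is a polynomial without constant term in the original map.
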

	
	\begin{proof}
		First, for simplicity let us assume that we have only one parabolic point $S\,=\,\{p\}$. The 
		map $f$ in the statement of the lemma sends any $F_*$
		to the parabolic push-forward of $F_*$ constructed in Section 
		\ref{parabolic pushforward}.
		
		We claim that $f(F_*)$ is parabolic semistable. To prove this, note
		that if $E_*\,:= \,f(F_*)$, then $$\gamma^*(E_*)\,\cong\, \bigoplus_{\sigma\in 
			\textnormal{Gal}(\gamma)} \sigma^*(F_*),$$ where $\sigma^*(F_*)$ has the obvious parabolic 
		structure coming from $F_*$. Clearly $\text{par}\mu(\sigma^*(F_*))\,=\,\text{par}\mu(F_*)$ for all $\sigma$. Therefore 
		$\gamma^*(E_*)$ is a direct sum of parabolic stable bundles of same parabolic slope, which 
		implies that $\gamma^*(E_*)$ is parabolic semistable. Thus $E_*$ must be parabolic 
		semistable as well, since any sub-bundle $E'_*\,\subset\, E_*$ with strictly larger parabolic 
		slope would give rise to a subbundle $\gamma^*(E'_*)\,\subset \,\gamma^*(E_*)$ of strictly 
		larger parabolic slope, contradicting parabolic semistability of $\gamma^*(E_*)$.
		Hence $f(F_*)$ is parabolic semistable.
		
		It can be shown that $E_*$ is 
		actually parabolic stable. To prove this, take any non-trivial sub-bundle $E'\,\subset\, E$ such 
		that $\textnormal{par}\mu(E'_*)\,= \,\textnormal{par}\mu(E_*)$. Then $$\gamma^*(E'_*)\,\subset 
		\,\gamma^*(E_*)\,=\,\underset{\sigma\in \textnormal{Gal}(\gamma)}{\bigoplus} \sigma^*(F_*)$$
		is a sub-bundle with the same parabolic slope. Now, it is clear that $\gamma^*(E'_*)$ is also 
		parabolic semistable. Thus it contains a parabolic stable sub-bundle $F'_*$. We note that the 
		sub-bundles $$\{\sigma^*(F_*)\,\,\mid\,\,\sigma\in\textnormal{Gal}(\gamma)\}$$
		are mutually non-isomorphic. Indeed, otherwise there would exist a parabolic
		isomorphism $\widetilde f$ from $F_*$ to $\sigma^*F_*$ for some
		$\sigma\,\in\, {\rm Gal}(\gamma)\setminus\{e\}$; at a parabolic point $y$ it should preserve the
		filtrations of $F_y$ and $F_{\sigma(y)}$. But since the weights of $F_*$ at the parabolic
		points $y$ and $\sigma(y) $ are collectively distinct, and a parabolic
		isomorphism preserves weights, this leads to a contradiction.
		Therefore, it follows that all projections $F'_*\,\longrightarrow\, \sigma^*(F_*)$ except one must 
		be zero, so that $\gamma^*(E'_*)$ contains one of the $\sigma^*(F_*)$. But $\gamma^*(E'_*)$ 
		is $\textnormal{Gal}(\gamma)$-equivariant, which would force that $\gamma^*(E'_*) \,=\, 
		\gamma^*(E_*)$; this clearly implies that $E'_*\,=\,E_*$.
		Consequently, $E_*$ is parabolic stable.
		
		Next, we argue that $\textnormal{Im}(f)\,\subseteq\,\MLalpha$. Let $E_*\,=\,f(F_*)$. There exists a tautological 
		trivialization of $\gamma^*L$ over $Y_L$, which induces an isomorphism 
		\begin{align}\label{tautological trivialization}
			\theta\,:\, \mathcal{O}_{Y_L}\,\longrightarrow\, \gamma^*L\,.
		\end{align}  For each $1\leq i\leq m$, the induced map $\theta_{q_i}\,:\, k\,
\longrightarrow\, (\gamma^*L)_{q_i}\,=\,L_p$ is defined by $\lambda \,\longmapsto\, \lambda q_i.$ This induces an isomorphism
		\begin{align*}
			Id_F\otimes \theta \,:\, F\,\xrightarrow{\,\,\,\simeq\,\,}\, F\otimes \gamma^*L.
		\end{align*}
		This, in turn, produces a canonical isomorphism $\psi: E\simeq E\otimes L$ on the underlying bundle arising from $\gamma_*(Id_F\otimes\theta)$ followed by projection formula:
		\[ \psi\,:\,E\,=\,\gamma_*F\,\xrightarrow{\,\,\,\simeq\,\,}\,{\gamma_*(Id_F\otimes\theta)}
\, \gamma_*(F\otimes \gamma^*L)\,\cong\, E\otimes L.\] 
		Now $E_p\,=\, \bigoplus_{i=1}^{m}F_{q_i}$, and the map $\psi_p\,:\,E_p
\,\longrightarrow\, E_p\otimes L_p$ on the fiber takes $F_{q_i}$ to $F_{q_i}\otimes L_p$, which clearly implies that 
		$\psi_p$ preserves the filtration induced on $E_p$. Thus we have $\text{Im}(f)\,\subseteq\, \MLalpha.$
		
		To show the surjectivity of $f$, let $E_*\,\in\, \MLalpha$, so there exists a parabolic isomorphism 
		\[\varphi_*\,:\, E_*\,\stackrel{\simeq}{\longrightarrow}\, E_*\otimes L.\] Suppose 
		$(\alpha^p_1\,<\,\alpha^p_2\,<\,\cdots\,<\,\alpha^p_r)$ are the weights at $p$. We would like
		to show 
		that there exists $F_*\,\in\, \mathcal{N}_L$ such that $f(F_*)\,\simeq\, E_*$. Since $E_*$ is 
		parabolic stable, it is simple, and hence any parabolic endomorphism of $E_*$ is 
		a constant scalar multiplication. As a consequence, any two parabolic isomorphisms from 
		$E_*$ to $E_*\otimes L$ will differ by a constant scalar multiplication. Thus, we can re-scale $\varphi_*$ 
		by multiplying with a nonzero scalar, so that the $m$--fold composition
		\[\underset{m-times}{\varphi_*\,\circ\cdots\circ\,\varphi_*}\,:\, E_*
		\,\longrightarrow\, E_*\otimes L^m\]
		coincides with $\textnormal{Id}_{E_*}\otimes \tau $, where $\tau$ is the chosen
		nowhere vanishing section of $L^m$. This produces the isomorphism
		\[\underset{m-times}{\varphi\,\circ\cdots\circ\,\varphi} \,=\,
		\textnormal{Id}_E\otimes \tau\,:\, E \,\longrightarrow\, E\otimes L^m\]
		on the underlying bundles. Then, the argument given in the proof of \cite[Lemma 2.1]{BH10}
		will produce a vector bundle $F$ on $Y_L$ with $\gamma_*F\,\cong\, E$. Let us briefly recall
		the argument \cite{BH10} for convenience. Consider the pull-back $\gamma^*\varphi$, and compose it with the tautological trivialization of $\gamma^*L$ to get a morphism 
		\[\phi\,:\, \gamma^*E\,\longrightarrow\, \gamma^*E.\] 
		Since $Y_L$ is irreducible, the characteristic polynomial of $\phi_y$ remains unchanged as
		$y\,\in \,Y_L$ moves. This allows us to decompose $\gamma^*E$ into generalized eigenspace
		sub-bundles. If $F$ is an eigenspace sub-bundle of $\gamma^*E$, then we have
		$\gamma_*F\,\cong\, E$. Moreover, the decomposition $\gamma^*(E)=\bigoplus_{i=1}^{m}(\mu^i)^*F$ is precisely the decomposition of $\gamma^*(E)$ into generalized eigenspace sub-bundles.
		
		Our next task is to produce a parabolic structure on $F$ so that the parabolic push-forward $\gamma_*(F_*)$ (in the sense of Remark \ref{parabolic pushforward}) coincides with $E_*$. To see this, 
		recall the description of $\theta$ in (\ref{tautological trivialization}), and notice that for any choice of $q\in \gamma^{-1}(p)$, the map $\phi_q$ is precisely the composition
		\begin{align}\label{phi_q}
			(\gamma^*E)_q=E_p\xrightarrow{\varphi_p} E_p\otimes L_p = (\gamma^*E)_q\otimes (\gamma^*L)_q\xrightarrow{Id\otimes(\theta_q)^{-1}}(\gamma^*E)_q\,,\,
		\end{align}
		where $\theta_q\,:\, k\,\longrightarrow\, (\gamma^*L)_q\,=\,L_p$ is defined by
$\lambda\,\longmapsto\, \lambda q$. Thus, if
		\begin{align*}
			E_p\,=\,E^p_1\,\supsetneq \,E^p_2\,\supsetneq\,\cdots\,\supsetneq\,E^p_r\,\supsetneq\,0
		\end{align*}
		is the given parabolic filtration of the fiber $E_p$, then as $\varphi_*$ is a parabolic isomorphism,
		\begin{align}
			\forall\,j\in[0,r],\,\,\,\varphi_p(E^p_j)\,=\,E^p_j\otimes L_p\,\implies\,\phi_q(E^p_j)\,=\,E^p_j\,\,\,\,[\text{from}\,(\ref{phi_q})].
		\end{align}
		Let $\phi^s_q$ be the semisimple part of $\phi_q$ under its Jordan-Chevalley decomposition. It is well-known that $\phi^s_q$ can be expressed as a polynomial in $\phi_q$ without constant coefficient. Thus 
		\[\phi^s_q(E^p_j) = E^p_j\,\,\,\forall\,j\in[0,r-1].\]
		Moreover, the generalized eigenspaces of $\phi_q$ (namely $F_{q_i}$'s) are the eigenspaces for $\phi^s_q$.
		Thus $\phi^s_q\,:\,E_p\,\longrightarrow\, E_p$ and the filtration $E_p\,=\,E^p_1
\,\supsetneq\,\cdots\,\supsetneq\, E^p_r\,\supsetneq\,0$ allow us to apply Lemma \ref{linear algebra result}, which gives us a basis of $E_p$\,\,, say $\{v_1,\cdots,v_r\}$, such that each $E^k_p = \langle v_k,\cdots, v_r\rangle$ and each $v_j$ is contained in a unique $F_{q_l}$. From this data, we can produce a full-flag parabolic structure on the
		fibers $F_{q_1},\cdots,F_{q_m}$ of $F$ as follows:
		
		Choose a basis $B\,=\,\{v_1,\,v_2,\,\cdots,\,v_r\}$ of $E_p$ as in
		Lemma \ref{linear algebra result}, and for each $q_i$, consider the set $B_i\,:=
		\,B\cap F_{q_i}$. By symmetry, each $B_i$ consists of $n$ elements and spans $F_{q_i}$.
		Suppose that $B_i \,=\, \{v_{i_1},\,v_{i_2},\,\cdots,\,v_{i_n}\},$ where
		$i_1\,<\,i_2\,<\,\cdots\,<\,i_n.$ Then consider the following weighted full-flag filtration
		at $F_{q_i}$:
		\begin{align*}
			F_{q_i}\,=\,\langle v_{i_1},\,v_{i_2},\,\cdots,\,v_{i_n}\rangle &\supsetneq \langle v_{i_2},\cdots,v_{i_n}\rangle \supsetneq \cdots \supsetneq \langle v_{i_n}\rangle\supsetneq 0,\\
			\alpha^p_{i_1} &<\alpha^p_{i_2} <\cdots \cdots\cdots\cdots<\alpha^p_{i_n}.
		\end{align*}
		By repeating this for all $1\,\leq\, i\,\leq\, m$, a parabolic bundle $F_*$ on $Y_L$ is constructed.
		Note that $F_*$ must be parabolic stable, because if $F'\,\subset\, F$ is any sub-bundle
		such that $\text{Par}\mu(F'_*)\,\geq \,\text{Par}\mu(F_*)$, then the equalities
		\[\text{par}deg(\gamma_*(F'_*))\,=\,\text{par}deg(F'_*)\ \ \text{ and }\ \ 
		\text{rank}(\gamma_*(F')) \,=\, m\cdot \text{rank}(F')\]
		would imply that $\gamma_*(F')\,\subset\, E$ violates the condition of parabolic stability
		for $E_*$. Thus $F_*\,\in\, \mathcal{N}^t_L$ for some $t\in P(\alpha).$ Moreover, the
		parabolic push-forward of $F_*$ under $\gamma$ as in Remark \ref{parabolic pushforward}
		coincides with $E_*$.
		
		Finally, if the number of parabolic points on $X$ is more than 1, then an exactly similar 
		argument with the obvious modifications will give the result.
	\end{proof}
	
	We note that if $|D|\,=\,s$, then each $\mathcal{N}^t_L$ is of dimension 
	\begin{align*}
		\dim(\mathcal{N}^t_L) &= \frac{r^2}{m^2}(\text{genus}(Y_L)-1)+1-g+sm\cdot
		\dfrac{\frac{r}{m}(\frac{r}{m}-1)}{2}\\
		&= (g-1)(\frac{r^2}{m}-1)+\dfrac{sr(\frac{r}{m}-1)}{2}\,\,\,\,\,[\because \text{genus}(Y_L)=m(g-1)+1].
	\end{align*}
	
	Since $\dim(\Malpha)\,= \,(g-1)(r^2-1)+s\cdot\dfrac{r(r-1)}{2}$, from
	Lemma \ref{parabolic fixed point} it follows that
	$$
	\dim(\Malpha)-\dim(\MLalpha)
	\,\geq\, \dim(\Malpha)-\dim(\mathcal{N}_L)\,=\, r^2(\dfrac{m-1}{m})(g-1+\frac{s}{2})
	\,\geq\, 3.
	$$
	
	\begin{corollary}\label{cor1}
		The codimension of the closed subscheme
		$$Z_\alpha\,=\, \underset{L\in\Gamma\setminus\{\mathcal{O}_X\}}{\bigcup}\MLalpha\,\subset\, \Malpha
		$$
		is at least three.
	\end{corollary}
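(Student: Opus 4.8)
The plan is to reduce the statement to the per-$L$ dimension estimate obtained immediately above, the only additional ingredient being the finiteness of $\Gamma$. First I would record that $\Gamma\,\cong\,(\mathbb{Z}/r\mathbb{Z})^{2g}$ is a \emph{finite} group, so that
\[
Z_\alpha\,=\,\bigcup_{L\in\Gamma\setminus\{\mathcal{O}_X\}}\MLalpha
\]
is a finite union. Each $\MLalpha$ is closed in $\Malpha$, being the fixed-point locus of the finite-order automorphism of $\Malpha$ defined by the parabolic tensor product \eqref{eac} with $L$; hence $Z_\alpha$ is genuinely a closed subscheme, and it suffices to bound its codimension.

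Next I would invoke the dimension estimate derived just before the statement. For every $L\neq\mathcal{O}_X$, writing $m\,=\,\text{ord}(L)\,\geq\,2$, Lemma \ref{parabolic fixed point} produces the surjection $f\colon\mathcal{N}_L\to\MLalpha$, and comparing $\dim(\mathcal{N}^t_L)$ with $\dim(\Malpha)$ gives
\[
\codim_{\Malpha}(\MLalpha)\;=\;\dim(\Malpha)-\dim(\MLalpha)\;\geq\;r^2\,\frac{m-1}{m}\Bigl(g-1+\frac{s}{2}\Bigr)\;\geq\;3.
\]
The decisive feature is that this lower bound is \emph{uniform} in $L$: it depends on $L$ only through $m$, with the worst case $m=2$, so under the standing hypotheses $r\geq 2$, $g\geq 2$ (and $r\geq 3$ when $g=2$) the value $3$ is already forced. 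Thus each individual stratum $\MLalpha$ has codimension at least three.

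Finally I would combine the two observations. For a finite union of closed subschemes one has $\dim(Z_\alpha)\,=\,\max_{L}\dim(\MLalpha)$, and therefore
\[
\codim_{\Malpha}(Z_\alpha)\;=\;\min_{L\in\Gamma\setminus\{\mathcal{O}_X\}}\codim_{\Malpha}(\MLalpha)\;\geq\;3,
\]
which is the assertion. The argument is formal once the per-$L$ estimate is available, and I do not expect a genuine obstacle: the two features that could in principle break the conclusion, namely a union over infinitely many strata or a stratum whose codimension drops below three, are both excluded here, the former by the finiteness of $\Gamma$ and the latter by the uniformity of the estimate. The only residual care needed is the bookkeeping that the minimum over the finitely many nontrivial $L$ never falls below $3$.
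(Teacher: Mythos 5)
Your proposal is correct and follows essentially the same route as the paper: the codimension bound for each individual fixed-point locus $\MLalpha$ comes from the surjection $f\colon\mathcal{N}_L\to\MLalpha$ of Lemma \ref{parabolic fixed point} together with the dimension count $\dim(\Malpha)-\dim(\mathcal{N}_L)=r^2\bigl(\tfrac{m-1}{m}\bigr)\bigl(g-1+\tfrac{s}{2}\bigr)\geq 3$, and the corollary then follows by finiteness of $\Gamma$, exactly as in the paper. Your explicit remarks on the uniformity of the bound in $L$ (worst case $m=2$) and on the codimension of a finite union being the minimum of the codimensions merely make precise what the paper leaves implicit.
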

	
	\section{The Brauer group of the stack $\mathcal{PN}_{\alpha,i}$}
	
	We continue with the notation of Section \ref{se2}. For a parabolic bundle $E_*$, let 
	$\mathbb{P}(E)_*$ denote the corresponding parabolic $\text{PGL}(r,k)$-bundle. If $E_*$ is 
	stable parabolic, then $\mathbb{P}(E)_*$ is a stable parabolic $\text{PGL}(r,k)$-bundle.
	
	\begin{lemma}\label{crucial}
		Let $U_\alpha\,:=\,\Malpha\setminus Z_\alpha$ (see Corollary \ref{cor1}). Take any
		$E_*\,\in\, U_\alpha$, and let $E'_*\,:=\, E_*\otimes L$ for some $L\,\in \,\Gamma$. Then
		there exists a unique isomorphism of parabolic ${\rm PGL}(r,k)$-bundles between
		$\mathbb{P}(E)_*$ and $\mathbb{P}(E')_*$.
	\end{lemma}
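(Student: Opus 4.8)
The plan is to establish the existence and the uniqueness separately, observing that once one isomorphism is produced, the set of all isomorphisms $\mathbb{P}(E)_*\to\mathbb{P}(E')_*$ of parabolic ${\rm PGL}(r,k)$-bundles is a torsor under the automorphism group of $\mathbb{P}(E)_*$. Thus uniqueness is equivalent to the assertion that $\mathbb{P}(E)_*$ has no nontrivial parabolic ${\rm PGL}(r,k)$-automorphisms, and it is precisely here that the hypothesis $E_*\in U_\alpha$ is used.

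For existence I would invoke the tautological isomorphism of projective bundles: since projectivization is insensitive to twisting by a line bundle, there is a canonical isomorphism $\mathbb{P}(E)\cong\mathbb{P}(E\otimes L)$ over $X$. At each parabolic point $p_i$ the flag in $(E\otimes L)_{p_i}=E_{p_i}\otimes L_{p_i}$ is $E^i_j\otimes L_{p_i}$, and under the canonical identification $\mathbb{P}(E_{p_i}\otimes L_{p_i})=\mathbb{P}(E_{p_i})$ this is carried onto the flag of $E_{p_i}$; the weights are unchanged because $L$ carries the trivial parabolic structure. Hence the tautological isomorphism respects the full-flag parabolic structure and yields an isomorphism of parabolic ${\rm PGL}(r,k)$-bundles $\mathbb{P}(E)_*\cong\mathbb{P}(E')_*$, which is the one whose uniqueness is then asserted.

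For uniqueness I would use the standard description of automorphisms of a projective bundle: an $X$-automorphism of $\mathbb{P}(E)$ is the same datum as a pair $(M,\phi)$ with $M\in\Pic(X)$ and $\phi\colon E\xrightarrow{\sim}E\otimes M$ an isomorphism of vector bundles, taken modulo the scalar relation $(M,\phi)\sim(M,\lambda\phi)$ for $\lambda\in k^*$ (this comes from $\sigma^*\mathcal{O}(1)=\mathcal{O}(1)\otimes\pi^*M$ for $\pi\colon\mathbb{P}(E)\to X$, followed by $\pi_*$). Demanding that the automorphism preserve the flags at the $p_i$ translates into requiring $\phi$ to be an isomorphism of \emph{parabolic} bundles $E_*\xrightarrow{\sim}E_*\otimes M$, with $M$ given the trivial parabolic structure. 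Now the existence of $\phi\colon E\cong E\otimes M$ forces, upon taking determinants, $\xi\cong\xi\otimes M^{\otimes r}$, hence $M^{\otimes r}\cong\mathcal{O}_X$ and therefore $M\in\Gamma$. But a parabolic isomorphism $E_*\cong E_*\otimes M$ says exactly that $E_*$ is fixed by the action of $M$ in the sense of \eqref{eac}, i.e. $E_*$ lies in the fixed-point locus of $M$; since $E_*\in U_\alpha=\Malpha\setminus Z_\alpha$ lies in no such fixed locus for a nontrivial element of $\Gamma$, we must have $M\cong\mathcal{O}_X$.

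It then remains only to treat $M\cong\mathcal{O}_X$, where the datum reduces to a parabolic automorphism $\phi\colon E_*\xrightarrow{\sim}E_*$. As $E_*$ is parabolic stable it is parabolic simple, so $\phi$ is a nonzero scalar and hence trivial modulo $k^*$. Therefore the only automorphism of $\mathbb{P}(E)_*$ is the identity, and uniqueness follows. I expect the main obstacle to be the careful bookkeeping of the parabolic condition through the correspondence $\sigma\leftrightarrow(M,\phi)$, namely verifying that preservation of the full flags at the $p_i$ is genuinely equivalent to $\phi$ being a parabolic map; once this is in place, the determinant and simplicity arguments close the proof immediately.
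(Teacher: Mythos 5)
Your proposal is correct and follows essentially the same route as the paper: existence comes from the canonical isomorphism $\mathbb{P}(E)\cong\mathbb{P}(E\otimes L)$, which respects the flags and weights because $L$ carries the trivial parabolic structure, and uniqueness is reduced to showing $\mathbb{P}(E)_*$ has only the trivial automorphism by lifting a projective automorphism to an isomorphism $E\xrightarrow{\sim}E\otimes M$ of vector bundles, using the determinant to get $M\in\Gamma$, the hypothesis $E_*\in U_\alpha$ to force $M\cong\mathcal{O}_X$, and parabolic stability (simplicity) to conclude the lift is a scalar. The only cosmetic difference is that you spell out the correspondence between projective-bundle automorphisms and pairs $(M,\phi)$ modulo scalars, which the paper simply asserts.
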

	
	\begin{proof}
		Since the parabolic structure of $E'_*$ is induced from $E_*$, the canonical isomorphism 
		between the underlying $\text{PGL}(r,k)$-bundles $\mathbb{P}(E)$ and $\mathbb{P}(E')$ 
		actually gives an isomorphism of parabolic $\text{PGL}(r,k)$-bundles.  If there are two 
		isomorphisms between $\mathbb{P}(E)_*$ and $\mathbb{P}(E')_*$, then we get an automorphism 
		of $\mathbb{P}(E)_*$.
		
		Let $f_*$ be a nontrivial automorphism of $\mathbb{P}(E)_*$. 
		It evidently induces an automorphism $f$ of the underlying projective bundle $\mathbb{P}(E)$. 
		Then there exists a line bundle $L_0$ and an isomorphism of vector bundles 
		\[\widetilde{f}\,:\,E\,\longrightarrow\, E\otimes L_0\] which induces $f$. Taking determinant
		it follows that $L_0\,\in\, \Gamma$. Moreover, it is clear that $\widetilde{f}$ respects
		the parabolic structures on $E$ and $E\otimes L_0$, so we get an isomorphism of parabolic bundles
		\[\widetilde{f}_*\,:\, E_*\,\longrightarrow\, E_*\otimes L_0.\]
		But $E_*\,\in\, U_{\alpha}$, thus $L_0$ must be trivial. Hence $\widetilde{f}_*$ is an
		automorphism of $E_*$. Since $E_*$ is parabolic stable, it must be a scalar multiple of
		the identity map. But then the induced map $f_*$ must be the identity morphism.
	\end{proof} 
	
	The following proposition should be well-known to the experts, but we are providing a proof as we could not find it in the literature, and also for completeness.
	
	\begin{proposition}\label{brauer isomrphism of stacks}
	Let $\mathcal{X}$ be a locally Noetherian smooth Deligne-Mumford stack, and let
$\mathcal{Z}\,\hookrightarrow\, \mathcal{X}$ be a closed substack of codimension at least two. Then
$$H^2_{\text{\'et}}(\mathcal{X}\setminus\mathcal{Z},\,\mathbb{G}_m)\,\simeq\,
H^2_{\text{\'et}}(\mathcal{X},\,\mathbb{G}_m).$$
	\end{proposition}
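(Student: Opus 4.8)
The plan is to reduce the statement to the classical purity theorem for the Brauer group of regular schemes, the bridge being \'etale cohomology with supports along $\mathcal Z$. Throughout I will use that $\mathcal X$ is regular, which is the case in all the applications of the proposition. Write $j\colon \mathcal X\setminus\mathcal Z\hookrightarrow\mathcal X$ for the open immersion and $i$ for the complementary closed immersion. First I would invoke the localization long exact sequence of \'etale cohomology with supports
\[
\cdots\longrightarrow H^q_{\mathcal Z}(\mathcal X,\mathbb G_m)\longrightarrow H^q_{\text{\'et}}(\mathcal X,\mathbb G_m)\longrightarrow H^q_{\text{\'et}}(\mathcal X\setminus\mathcal Z,\mathbb G_m)\longrightarrow H^{q+1}_{\mathcal Z}(\mathcal X,\mathbb G_m)\longrightarrow\cdots,
\]
which shows that the restriction map is an isomorphism in degree $2$ provided one establishes the vanishing $H^2_{\mathcal Z}(\mathcal X,\mathbb G_m)=0=H^3_{\mathcal Z}(\mathcal X,\mathbb G_m)$.

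Next I would pass to the local cohomology \emph{sheaves} $\mathcal H^q_{\mathcal Z}(\mathbb G_m)$ on the small \'etale site of $\mathcal X$. By the local-to-global spectral sequence
\[
E_2^{p,q}=H^p\big(\mathcal X,\,\mathcal H^q_{\mathcal Z}(\mathbb G_m)\big)\Longrightarrow H^{p+q}_{\mathcal Z}(\mathcal X,\mathbb G_m),
\]
it is enough to show that $\mathcal H^q_{\mathcal Z}(\mathbb G_m)=0$ for $q\le 3$. Since these sheaves are of a purely \'etale-local nature, their vanishing can be checked after pulling back along an \'etale atlas $P\colon V\to\mathcal X$ with $V$ a scheme, which exists because $\mathcal X$ is Deligne--Mumford. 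Setting $Z:=P^{-1}(\mathcal Z)$, one has $P^{\ast}\mathcal H^q_{\mathcal Z}(\mathbb G_m)\cong\mathcal H^q_{Z}(\mathbb G_m)$, where $Z\subset V$ is closed of codimension at least $2$ (codimension being preserved by the \'etale morphism $P$) and $V$ is regular. Thus the whole problem reduces to the scheme-theoretic claim that, for a regular scheme $V$ and a closed subset $Z$ of codimension $\ge 2$, the sheaves $\mathcal H^q_{Z}(\mathbb G_m)$ vanish for $q\le 3$.

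For the scheme case I would compute these sheaves through the localization triangle $i_{\ast}Ri^{!}\mathbb G_m\to\mathbb G_m\to Rj_{\ast}\mathbb G_m$. Its long exact sequence of cohomology sheaves immediately gives $\mathcal H^0_{Z}(\mathbb G_m)=\mathcal H^1_{Z}(\mathbb G_m)=0$ (a unit supported on $Z$ is trivial, and units extend across $Z$ by normality), together with the identifications $\mathcal H^2_{Z}(\mathbb G_m)\cong R^1 j_{\ast}\mathbb G_m$ and $\mathcal H^3_{Z}(\mathbb G_m)\cong R^2 j_{\ast}\mathbb G_m$. I would then compute the stalks: at a geometric point $\bar z$ the stalk of $R^q j_{\ast}\mathbb G_m$ is $H^q\big(\textnormal{Spec}\,\mathcal O^{\mathrm{sh}}_{V,\bar z}\setminus Z,\,\mathbb G_m\big)$, the $\mathbb G_m$-cohomology of the punctured spectrum of a regular strictly henselian local ring from which a closed subset of codimension $\ge 2$ has been removed. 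For $q=1$ this stalk is $\Pic$ of that punctured spectrum, which vanishes because a regular local ring is factorial and its class group is unchanged by removing a codimension $\ge 2$ locus; hence $\mathcal H^2_{Z}(\mathbb G_m)=0$.

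The degree-$3$ vanishing is the essential point, and I expect it to be the main obstacle, since the non-torsion sheaf $\mathbb G_m$ is not controlled by the Kummer sequence alone. Here the stalk of $R^2 j_{\ast}\mathbb G_m$ is $\Br'\big(\textnormal{Spec}\,\mathcal O^{\mathrm{sh}}_{V,\bar z}\setminus Z\big)$, and this is exactly where Brauer purity must be invoked: the Brauer group of the regular strictly henselian local ring $\mathcal O^{\mathrm{sh}}_{V,\bar z}$ is trivial, and the classical purity isomorphism $\Br'(A)\xrightarrow{\sim}\Br'(\textnormal{Spec}\,A\setminus Z)$ for a regular ring $A$ and a closed subset of codimension $\ge 2$ (Grothendieck, Gabber) then forces this stalk to vanish. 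Consequently $\mathcal H^3_{Z}(\mathbb G_m)\cong R^2 j_{\ast}\mathbb G_m=0$, which completes the reduction and yields the desired isomorphism in degree $2$.
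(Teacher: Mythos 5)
Your proposal is correct in substance, but it takes a genuinely different route from the paper. The paper's proof chooses an \'etale atlas $V\to\mathcal{X}$, pulls $\mathcal{Z}$ back to a closed subscheme $Z'\subset V$ of the same codimension, and compares the two descent (\v{C}ech) spectral sequences $E_1^{i,j}=H^j_{\text{\'et}}(V^{\times i},\mathbb{G}_m)\Rightarrow H^{i+j}_{\text{\'et}}(\mathcal{X},\mathbb{G}_m)$ and its analogue for $V\setminus Z'$ over $\mathcal{X}\setminus\mathcal{Z}$: the purity theorem \cite[Theorem 1.1]{Ce}, applied \emph{globally} to each term of the \v{C}ech nerve, identifies the $E_1^{i,j}$-terms for all $j\le 2$ (units, Picard group, Brauer group), hence the abutments in total degree $\le 2$. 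You instead stay inside the \'etale topos of $\mathcal{X}$: excision reduces the claim to the vanishing of $H^2_{\mathcal{Z}}$ and $H^3_{\mathcal{Z}}$, the local-to-global spectral sequence reduces that to the vanishing of the local cohomology sheaves $\mathcal{H}^q_{\mathcal{Z}}(\mathbb{G}_m)$ for $q\le 3$, and you check this on stalks over an atlas, where the degree-$3$ case is exactly Brauer purity for (complements of codimension $\ge 2$ subsets in) spectra of strictly henselian regular local rings. So the deep input is the same in both arguments --- Gabber/\v{C}esnavi\v{c}ius purity, plus factoriality of regular local rings for $\Pic$ and normality for units --- but the paper quotes it in global form and handles the stack by \v{C}ech descent, whereas you quote it in local form and handle the stack by sheaf-theoretic localization, in effect re-deriving the global purity statement along the way. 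Your route avoids the verification that codimension is preserved on the fiber products $V^{\times i}$ (which the paper handles via \cite[Lemma 5.1]{GS15}), at the cost of invoking more machinery (cohomology with supports and the local-to-global spectral sequence on a stack, and stalk computations of $R^qj_*\mathbb{G}_m$). One point where your write-up is actually more careful than the paper: you state explicitly that regularity of $\mathcal{X}$ is needed. As literally stated the proposition carries no such hypothesis, and it fails for general locally noetherian Deligne--Mumford stacks (purity already fails for non-regular schemes); the paper uses regularity silently when it invokes purity ``for smooth schemes'', and it does hold in all the applications, where $\mathcal{X}=\mathcal{PN}_{\alpha,i}$ is smooth.
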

	
\begin{proof}
Consider an \'etale cover $V\,\longrightarrow\, \mathcal{X}$. The pull-back of $\mathcal{Z}$ produces a 
closed subscheme $Z'$ of $V$. We still have $\codim_V(Z')\,\geq\, 2$ \cite[Lemma 5.1]{GS15}.

Let $V^{\times i}\,= \,V\times V\times\cdots \times V$ \,($i+1$ times). Now, consider the descent spectral sequence
$$E_1^{i,j}\,=\,H^j_{\text{\'et}}(V^{\times i},\, \mathbb{G}_m)\,\implies\, H^{i+j}_{\text{\'et}}(\mathcal{X},
\,\mathbb{G}_m), \,\,\,(i)$$
		and similarly
$$E_1^{i,j}\,=\,H^j_{\text{\'et}}((V\setminus Z')^{\times i},\, \mathbb{G}_m)\,\implies\,
H^{i+j}_{\text{\'et}}(\mathcal{X}\setminus\mathcal{Z},\,\mathbb{G}_m).\,\,\,(ii)$$
By purity statements for \'etale cohomology for smooth schemes \cite[Theorem 6.1]{Ce}, we know that
the open embedding $V\setminus Z'\,\hookrightarrow\, V$ induces isomorphisms on the group of units, the
Picard group and the cohomological Brauer group. Thus the $E_1^{i,j}$-terms in $(i)$ and $(ii)$ are
isomorphic for all $j\leq 2$ and all $i$. This in turn induces isomorphisms on the convergence
terms for all $i+j\leq 2$, and thus we have $H^2_{\text{\'et}}(\mathcal{X}\setminus\mathcal{Z},\,
\mathbb{G}_m)\,\simeq\, H^2_{\text{\'et}}(\mathcal{X},\,\mathbb{G}_m).$
	\end{proof}

	Next, consider 
	\[\xymatrix{
		& \mathcal{PN}_{\alpha,i} \ar[d]^{p}\\
		\Malpha \ar[r]^{\phi_\alpha} & \PNalpha
	}\]
	where $\phi_\alpha\,:\, \Malpha\,\longrightarrow \,\PNalpha$ denotes the quotient by $\Gamma$, and
$p\,:\,\mathcal{PN}_{\alpha,i}\,\longrightarrow\, \PNalpha$ is the coarse moduli space map. $\mathcal{PN}_{\alpha,i}$ is a smooth Deligne-Mumford stack, as the automorphism groups of its points are finite.\\
	Note that $\mathcal{PN}_{\alpha,i}$ is the quotient stack $[\Malpha/\Gamma]$, while $\PNalpha
\,=\,\Malpha/\Gamma$ is the GIT quotient, and thus the map $p$ is natural map
	$$p\,:\,[\Malpha/\Gamma]\,\longrightarrow\, \Malpha/\Gamma.$$
	Thus, by \cite[\href{https://stacks.math.columbia.edu/tag/075T}{Tag 075T},\,Lemma 100.33.7]{stacks-project}, $p$ is smooth. Since $\phi_\alpha$ is a finite morphism, clearly $\codim_{\PNalpha}(\phi_\alpha(Z_\alpha))\geq 2$ by Corollary \ref{cor1}. Since smooth morphisms of Artin stacks are codimension-preserving \cite[Lemma 5.1]{GS15}, 
	we conclude that the closed substack
	$$\mathcal{Z}_\alpha\,:=\,p^{-1}(\phi_\alpha(Z_\alpha))\,\hookrightarrow\,\mathcal{PN}_{\alpha,i}$$
	also has codimension two. \\

	\begin{corollary}\label{isomorphism of brauer group for stack}
		Let $PN^{sm}_{\alpha,i}$ denote the smooth locus of\, $\PNalpha$.
Then $\Br(\mathcal{PN}_{\alpha,i})\,\simeq\, \Br(PN^{sm}_{\alpha,i}).$	  
	\end{corollary}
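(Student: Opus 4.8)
The plan is to compare the two Brauer groups over the common open locus where $\Gamma$ acts freely, and then to restore the discarded loci by applying a purity theorem on each side separately.

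First I would isolate the free locus. By construction $Z_\alpha=\bigcup_{L\neq\mathcal O_X}\MLalpha$ is $\Gamma$-invariant, so for $E_*\in U_\alpha:=\Malpha\setminus Z_\alpha$ no nontrivial $L\in\Gamma$ fixes the class of $E_*$; combined with Lemma~\ref{crucial}, which shows that the only automorphism of $\mathbb{P}(E)_*$ is the identity, this means the $\Gamma$-action on $U_\alpha$ is free and the quotient stack $[U_\alpha/\Gamma]$ has trivial inertia. Hence $[U_\alpha/\Gamma]$ is represented by the smooth quasi-projective scheme
\[
U\,:=\,U_\alpha/\Gamma\,=\,\phi_\alpha(U_\alpha)\,=\,\PNalpha\setminus\phi_\alpha(Z_\alpha),
\]
and, writing $\mathcal U:=\mathcal{PN}_{\alpha,i}\setminus\mathcal Z_\alpha=p^{-1}(U)$ and using $\mathcal{PN}_{\alpha,i}=[\Malpha/\Gamma]$, the restriction $p|_{\mathcal U}\colon\mathcal U\longrightarrow U$ is an isomorphism. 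In particular $\Br(\mathcal U)\simeq\Br(U)$.

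Next I would run purity on the two sides. On the stack side, $\mathcal Z_\alpha$ has codimension at least $2$ in $\mathcal{PN}_{\alpha,i}$, so Proposition~\ref{brauer isomrphism of stacks} gives $H^2_{\text{\'et}}(\mathcal{PN}_{\alpha,i},\mathbb G_m)\simeq H^2_{\text{\'et}}(\mathcal U,\mathbb G_m)$, and restricting to torsion yields $\Br(\mathcal{PN}_{\alpha,i})\simeq\Br(\mathcal U)$. On the coarse side, $U$ is smooth, so $U\subseteq PN^{sm}_{\alpha,i}$, and the complement $PN^{sm}_{\alpha,i}\setminus U$ is contained in $\phi_\alpha(Z_\alpha)$, which has codimension at least $2$ because $\phi_\alpha$ is finite and $\codim_{\Malpha}Z_\alpha\geq 3$ by Corollary~\ref{cor1}. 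Purity for the Brauer group of the smooth variety $PN^{sm}_{\alpha,i}$ (\cite[Theorem~1.1]{Ce}) then gives $\Br(PN^{sm}_{\alpha,i})\simeq\Br(U)$.

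Chaining the three isomorphisms gives
\[
\Br(\mathcal{PN}_{\alpha,i})\,\simeq\,\Br(\mathcal U)\,\simeq\,\Br(U)\,\simeq\,\Br(PN^{sm}_{\alpha,i}),
\]
as desired. I expect the only delicate point to be the middle step $\mathcal U\simeq U$: one must check that Lemma~\ref{crucial} does double duty, guaranteeing both that the $\Gamma$-action is set-theoretically free on $U_\alpha$ and that it carries no residual stacky automorphisms, so that over $U$ the coarse-moduli morphism $p$ is genuinely an isomorphism of stacks. The two purity inputs are then formal, since the required codimension bounds are already secured by Corollary~\ref{cor1}.
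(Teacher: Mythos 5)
Your proposal is correct and takes essentially the same route as the paper: stack-side purity via Proposition~\ref{brauer isomrphism of stacks} applied to $\mathcal{Z}_\alpha$, Lemma~\ref{crucial} to see that $p$ restricts to an isomorphism over $\phi_\alpha(U_\alpha)=\PNalpha\setminus\phi_\alpha(Z_\alpha)$, and then identification with $\Br(PN^{sm}_{\alpha,i})$ using the codimension bound from Corollary~\ref{cor1}. The only difference is expository: you make explicit the coarse-side purity step (via \cite[Theorem 1.1]{Ce}) that the paper leaves implicit, and you correctly note that set-theoretic freeness comes from removing the fixed loci while Lemma~\ref{crucial} supplies the triviality of the residual stacky automorphisms.
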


	\begin{proof}
		Let $\mathcal{Z}_{\alpha} \,=\, p^{-1}Z_{\alpha}$, with $Z_\alpha$ as in Corollary \ref{cor1}.
Use Proposition \ref{brauer isomrphism of stacks} for $\mathcal{Z}_\alpha\,\hookrightarrow\, \mathcal{PN}_{\alpha,i}$, together with Lemma \ref{crucial}, which says that the map $p$ restricts to an isomorphism over $\phi_\alpha(U_\alpha)=\PNalpha\setminus\phi_\alpha(Z_\alpha)$.
	\end{proof}

	This also completes the proof of Theorem \ref{thi} $(1)$.

	\section{Description of the Brauer groups for sufficiently small weights}

	From this section onward, we allow ourselves to work with parabolic structures which can be consisting of partial flags as well. We shall also assume from now on the following condition on the parabolic structure :
	\begin{align}\label{coprime condition on parabolic structure} 
		g.c.d.(r,d,m^1_{1},\cdots,m^1_{l_1},\cdots,m^i_j,\cdots,m^s_1,\cdots,m^s_{l_s})=1.
	\end{align}
	(See Definition \ref{pardef}).  Assume that $\alpha$ be a sufficiently small generic weight satisfying these conditions. It is known that such a generic weight exists \cite[Proposition 3.2]{BY99}. Since $\alpha$ is chosen sufficiently small, there is a morphism
	\begin{equation}\label{ef}
		\pi\,:\,\Malpha\,\longrightarrow\, \textnormal{M}^{ss}_\xi
	\end{equation}
	that sends a parabolic bundle to the underlying vector bundle
	by simply forgetting the parabolic structure. $\Gamma$ acts on both $\Malpha$ and $M^{ss}_\xi$, and
$\pi$ in \eqref{ef} is equivariant under this action. Restrict $\pi$ over the stable locus
$\Mxi\subset M^{ss}_\xi$. Each fiber of $\pi\,:\, \pi^{-1}(\Mxi)\,\longrightarrow\, \Mxi$ is a product of
flag varieties. Note that $\pi$ is flat (this is sometimes called the ``miracle flatness theorem''
\cite[\href{https://stacks.math.columbia.edu/tag/00R4}{Tag 00R4}, Lemma 10.128.1]{stacks-project}).  In fact,
$\pi$ is an \'etale locally trivial fibration \cite[Theorem 1.3]{PP21}. The closed subset
\begin{equation}\label{efp2}
\mathcal{Z}\,:=\, \underset{L\in\Gamma\setminus \{\mathcal{O}_X\}}{\bigcup}M^L_\xi\, \subset\, \Mxi
\end{equation}
(see \eqref{efp}) has codimension at least three \cite[Corollary 2.2]{BH10}. Define
\begin{equation}\label{efp3}
\mathcal{U}\,:=\, \Mxi\setminus \mathcal{Z}.
\end{equation}
	
Consider $\pi^{-1}(\mathcal{U})\,\subset\,\pi^{-1}(\Mxi)$. Clearly the action of $\Gamma$ on $\Mxi$ restricts 
to a free action on $\mathcal{U}$, and hence the action of $\Gamma$ on $\pi^{-1}(\mathcal{U})$ is also free; 
indeed, if $E_*$ is an $L$-fixed point, then the underlying bundle $E$ is also fixed by $L$. Thus, if 
$PN^{sm}_{\alpha,i}$ (respectively, $N(r)^{sm}_i$) is the smooth locus of $\PNalpha$ (respectively, 
$N(r)_i$), then we have $$\mathcal{U}/\Gamma\subset N(r)^{sm}_i \,\,\,\ \text{ and } \,\,\,  
\pi^{-1}(\mathcal{U})/\Gamma\subset PN^{sm}_{\alpha,i}.$$ A straightforward codimension estimate shows that
\begin{align}\label{codimension estimates}
\codim_{\Malpha}(\Malpha\setminus\pi^{-1}(\mathcal{U}))	\,= 
\,\codim_{\Mxi}(\Mxi\setminus\mathcal{U})\,\geq\, 3.
\end{align}
	
	In view of these codimension estimates, we have
	\begin{align}\label{brauer groups of open sets}
		\Br(\mathcal{U}/\Gamma)=\Br(N(r)^{sm}_i)\,\,\text{and}\,\,\Br(\pi^{-1}(\mathcal{U})/\Gamma)=\Br(PN_{\alpha,i}^{sm}).
	\end{align}
	
	This enables us to work with finite \'etale morphisms in what follows. Consider the diagram
	\begin{align}\label{diagram 1}
		\xymatrix{
			\pi^{-1}(\mathcal{U}) \ar[r]^{\phi'} \ar[d]_{\pi} & \pi^{-1}(\mathcal{U})/\Gamma  \ar[d]^{\pi'} \\
			\mathcal{U} \ar[r]^{\phi} & \mathcal{U}/\Gamma}
	\end{align}
	where $\phi$ and $\phi'$ are both finite \'etale (due to the removal of all the fixed
	points), and $\pi,\, \pi'$ are both fibrations in the \'etale topology. The typical fibers of
	$\pi$ and $\pi'$ are the same, with both being isomorphic to the product of flag
varieties. In fact the diagram in \eqref{diagram 1} is Cartesian. Denote the common typical fiber of the two fiber bundles by $F$.
	
	\begin{proposition}\label{brauer group iso 1}
Let $\alpha$ be a sufficiently small weight, and $PN^{sm}_{\alpha,i}$ (respectively, $N(r)^{sm}_i$) be the smooth locus of $\PNalpha$ (respectively, $N(r)_i$), where $i\equiv d (mod \,r)$. Let $\mathcal{U}\subset \Mxi$ be as above. Then $\Br(\,\text{PN}^{sm}_{\alpha,i}\,)$ can be identified with the kernel of the natural surjective map
$$\Br(\,\textnormal{N}(r)^{sm}_i\,)\,=\, \Br(\mathcal{U}/\Gamma)\,\longrightarrow\, \Br(\mathcal{U})
\,=\,\Br(\Mxi)\,\cong\, \mathbb{Z}/\delta\mathbb{Z},$$
		where $\delta\,=\,{\rm g.c.d.}(r,\,d).$
	\end{proposition}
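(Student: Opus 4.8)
The plan is to run the Hochschild--Serre spectral sequence $E_2^{p,q}=H^p(\Gamma,H^q_{\text{\'et}}(-,\mathbb G_m))\Rightarrow H^{p+q}_{\text{\'et}}(-/\Gamma,\mathbb G_m)$ for the two finite \'etale Galois $\Gamma$-covers $\phi$ and $\phi'$ of the Cartesian square \eqref{diagram 1}, and to compare the two through the flag fibrations $\pi,\pi'$. On the base $\mathcal U$ this is exactly the computation of \cite{BH10} underlying \eqref{ei}: one has $H^0(\mathcal U,\mathbb G_m)=k^*$, $\Pic(\mathcal U)=\Pic(\Mxi)=\mathbb Z$ with trivial $\Gamma$-action (so $E_2^{1,1}=H^1(\Gamma,\mathbb Z)=0$), and $\Br(\mathcal U)=\Br(\Mxi)\cong\mathbb Z/\delta\mathbb Z$. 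Hence $\phi^*\colon\Br(\mathcal U/\Gamma)\to\Br(\mathcal U)$ is surjective with kernel $E_\infty^{2,0}(U)=H^2(\Gamma,k^*)/H$, and this kernel is precisely the group I must match with $\Br(PN^{sm}_{\alpha,i})$.

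The decisive new input upstairs is that $\Br(\pi^{-1}(\mathcal U))=0$. Here $\pi$ is an \'etale-locally trivial fibration whose fibre is a product of (partial) flag varieties, i.e. an iterated Grassmannian/Severi--Brauer bundle built from the universal projective bundle at the parabolic points, whose class generates $\Br(\mathcal U)\cong\mathbb Z/\delta\mathbb Z$ by \cite{BBGN07}. Via an Amitsur-type argument the tautological subbundles of the flags furnish classes whose $d_2$-obstructions in the Leray sequence of $\pi$ are suitable multiples of the generator $\beta_0$; the coprimality hypothesis $\gcd(r,d,m^i_j)=1$ guarantees that these generate all of $\langle\beta_0\rangle=\Br(\mathcal U)$, so that $\Br(\pi^{-1}(\mathcal U))=\Br(\mathcal U)/\langle\beta_0\rangle=0$. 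Thus the entire row $q=2$ of the spectral sequence for $\phi'$ vanishes --- this is the structural difference from the base, where that row produced the $\mathbb Z/\delta\mathbb Z$ quotient.

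Next I would dispose of the row $q=1$ upstairs. From the Leray sequence of $\pi$ (fibre a product of flag varieties, with free Picard group and $H^0=k^*$, $\Br=0$), $\Pic(\pi^{-1}(\mathcal U))$ is a finitely generated free abelian group, an extension of the trivial $\Gamma$-module $\Pic(\mathcal U)=\mathbb Z$ by the lattice of genuine relative flag classes, on which $\Gamma$ also acts trivially (tensoring by the one-dimensional fibres $L_{p_i}$ is trivial on Picard classes). Since a finite group acting trivially on the sub and the quotient of a torsion-free module acts trivially on the whole, $\Gamma$ acts trivially on $\Pic(\pi^{-1}(\mathcal U))\cong\mathbb Z^{n}$, whence $E_2^{1,1}(PU)=H^1(\Gamma,\Pic(\pi^{-1}(\mathcal U)))=\mathrm{Hom}(\Gamma,\mathbb Z^{n})=0$. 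With the rows $q=1,2$ both killed, $\Br(PN^{sm}_{\alpha,i})=\Br(\pi^{-1}(\mathcal U)/\Gamma)$ collapses to $E_\infty^{2,0}(PU)=H^2(\Gamma,k^*)/\mathrm{im}\,d_2^{0,1}(PU)$, which by naturality of the comparison is a quotient of the base term $E_\infty^{2,0}(U)=H^2(\Gamma,k^*)/H$ via the natural surjection.

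The main obstacle is to promote this surjection $H^2(\Gamma,k^*)/H\twoheadrightarrow\Br(PN^{sm}_{\alpha,i})$ to an isomorphism, equivalently to prove $\mathrm{im}\,d_2^{0,1}(PU)=H$. I would do this through the Leray sequence of the downstairs fibration $\pi'$: the same Amitsur-type reasoning gives $\ker(\pi'^*)=\langle\bar\beta\rangle$, the cyclic group generated by the class $\bar\beta$ of the universal projective bundle at $p_i$ over $N(r)^{sm}_i$ (independent of the chosen parabolic point by rigidity of the universal $\mathrm{PGL}(r,k)$-bundle on $X\times N(r)^{sm}_i$), so that $\Br(PN^{sm}_{\alpha,i})\cong\Br(\mathcal U/\Gamma)/\langle\bar\beta\rangle$. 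Since $\phi^*\bar\beta=\beta_0$ generates $\mathbb Z/\delta\mathbb Z$, one has $\langle\bar\beta\rangle+\ker\phi^*=\Br(\mathcal U/\Gamma)$, so the composite $\ker\phi^*\hookrightarrow\Br(\mathcal U/\Gamma)\twoheadrightarrow\Br(\mathcal U/\Gamma)/\langle\bar\beta\rangle$ is surjective with kernel $\ker\phi^*\cap\langle\bar\beta\rangle=\langle\delta\bar\beta\rangle$. The crux, which I expect to be the delicate point, is therefore to show that $\bar\beta$ has order exactly $\delta$ (so that $\delta\bar\beta=0$ and this intersection is trivial); I would extract this from the explicit description of the projective-bundle classes and the Weil-pairing computation underlying \eqref{ei} in \cite{BH10}. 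Granting it, $\ker\phi^*\xrightarrow{\ \sim\ }\Br(PN^{sm}_{\alpha,i})$, that is $\Br(PN^{sm}_{\alpha,i})\cong\ker\bigl(\Br(N(r)^{sm}_i)\to\Br(\Mxi)\bigr)$, as claimed.
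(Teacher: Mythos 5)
Your first half runs parallel to the paper's own proof and is essentially sound: you set up the Hochschild--Serre sequences for $\phi$ and $\phi'$, kill the $E_2^{1,1}$-terms by showing $\Gamma$ acts trivially on the torsion-free Picard groups (your extension argument for triviality of the action on $\Pic(\pi^{-1}(\mathcal U))$ is correct), and you use the gcd hypothesis to get $\Br(\pi^{-1}(\mathcal U))=0$ (the paper simply cites \cite[Theorem 1.1]{BD11} here; your Amitsur-type sketch is the idea behind that citation). The genuine gap is in your final step. After invoking an Amitsur-type theorem for the descended fibration $\pi'$ to write $\Br(PN^{sm}_{\alpha,i})\cong \Br(\mathcal U/\Gamma)/\langle\bar\beta\rangle$, you reduce the whole proposition to the claim that the descended universal class $\bar\beta$ has order exactly $\delta$, and for this you offer no argument beyond hoping to ``extract it from the Weil-pairing computation underlying \eqref{ei} in \cite{BH10}''. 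That reference does not contain such a statement: \cite{BH10} computes the subgroup $H$ as the image of the transgression (the obstruction to $\Gamma$-linearizing the theta bundle); it neither computes the Brauer class of a descended universal projective bundle nor asserts that the extension \eqref{ei} is split --- and $\mathrm{ord}(\bar\beta)=\delta$ is exactly the statement that $\langle\bar\beta\rangle$ splits \eqref{ei}. A priori one only knows $\delta\mid\mathrm{ord}(\bar\beta)$ and $r\bar\beta=0$; the element $\delta\bar\beta$ lies in $\ker\phi^*\cong H^2(\Gamma,k^*)/H$ and is a genuinely nontrivial descent obstruction, which no formal argument forces to vanish. Worse, granting your Amitsur step, the claim $\mathrm{ord}(\bar\beta)=\delta$ is \emph{equivalent} to Proposition \ref{brauer group iso 1}: comparing orders in $\Br(\mathcal U/\Gamma)/\langle\bar\beta\rangle$ and in $\ker\phi^*$ shows each implies the other. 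So you have reformulated the proposition, not proved it.

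The paper's proof shows how to avoid ever needing the order of any universal class. Instead of describing $\Br(\pi^{-1}(\mathcal U)/\Gamma)$ as a quotient of $\Br(\mathcal U/\Gamma)$, it compares the two five-term sequences \eqref{ses 1} and \eqref{ses 2} through the Picard groups: two snake-lemma diagrams reduce everything to the equality $H=H'$ of the two transgression images (the cokernels of $\phi^*$ and $\phi'^*$ on Picard groups), and that equality is supplied by the geometric Claim \ref{induced map on pic(F) is identity}, namely that the comparison map $g'$ on $\Pic(F)$ coming from the Cartesian square \eqref{diagram 1} is the identity. With $H=H'$ and $\Br(\pi^{-1}(\mathcal U))=0$, both sides become $H^2(\Gamma,k^*)/H$ and the proposition follows. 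To salvage your route you would have to prove $\delta\bar\beta=0$ directly --- i.e., carry out the descent computation for the universal projective bundle under the $\Gamma$-action, which is the hard content --- or else replace that step by a comparison of the two spectral sequences of the kind the paper makes.
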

	
	\begin{proof}
		The Leray spectral sequence for the map $\pi\,:\,\pi^{-1}(\mathcal{U})\,\longrightarrow\, \mathcal{U}$
		gives us the exact sequence
		\begin{align}\label{exact seq 1}
			0\,\longrightarrow\, H^1(\mathcal{U},\,\pi_*\mathbb{G}_m)\,
\xrightarrow{\,\,\,\pi^*\,\,}\,\Pic(\pi^{-1}(\mathcal{U}))\,\longrightarrow\, H^0(\mathcal{U},\,R^1\pi_*\mathbb{G}_m)
		\end{align}
		We have $\pi_*\mathcal{O}_{\pi^{-1}(\mathcal{U})}\,=\,
\mathcal{O}_{\mathcal{U}}$ since $\pi$ is flat and global sections of the fibers are trivial; this implies $\pi_*\mathbb{G}_m=\mathbb{G}_m.$
		Also, since $\mathcal{U}$ is simply connected, $R^1\pi_*\mathbb{G}_m$ is the constant sheaf with stalk $\Pic(F)$, where $F$ is a typical fiber of $\pi$ (for details, see \cite[Lemma 3.1]{BD11}.)\\
		Similarly, for $\pi'\,:\,\pi^{-1}(\mathcal{U})/\Gamma\,\longrightarrow\, \mathcal{U}/\Gamma$ we get:
		\begin{align}\label{exact seq 2}
0\,\longrightarrow\, H^1(\mathcal{U}/\Gamma,\,\pi'_*\mathbb{G}_m)\,\xrightarrow{\,\,\pi'^*\,} \,
\Pic(\pi^{-1}(\mathcal{U})/\Gamma)\,\longrightarrow\, H^0(\mathcal{U}/\Gamma,\,R^1\pi'_*\mathbb{G}_m)
		\end{align}
		Again for similar reasons as above, $\pi'_*\mathbb{G}_m\,=\,\mathbb{G}_m$. We claim
that $R^1\pi'_*\mathbb{G}_m$ is the constant sheaf with stalk $\Pic(F)$. This is because, as remarked
above, $R^1\pi'_*\mathbb{G}_m$ becomes the constant sheaf
		\,\,$\underline{\Pic(F)}_{\mathcal{U}}$\, after being pulled back
to the \'etale cover $\mathcal{U}\,\xrightarrow{\,\,\phi\,}\,\mathcal{U}/\Gamma$, and the action of
$\Gamma$ on $\Pic(F)$ is trivial.\\
		
		Thus the exact sequences of \eqref{exact seq 1} and \eqref{exact seq 2}	become the following, respectively:
		\begin{align}
			& 0\,\longrightarrow\, Pic(\mathcal{U})\,\xrightarrow{\,\,\pi^*\,}\,
\Pic(\pi^{-1}(\mathcal{U}))\,\longrightarrow\, \Pic(F)\\
			& 	0\,\longrightarrow\, \Pic(\mathcal{U}/\Gamma)\,\xrightarrow{\,\,\pi'^*\,\,}\,
\Pic(\pi^{-1}(\mathcal{U})/\Gamma)\,\longrightarrow\, \Pic(F).
		\end{align}
		
		Consequently, we get the following diagram:
		\begin{align}\label{diagram 2}
			\xymatrix{
				0 \ar[r] & \Pic(\mathcal{U}/\Gamma) \ar[r]^(.45){\pi'^*} \ar[d]^{\phi^*} & \Pic(\pi^{-1}(\mathcal{U})/\Gamma) \ar[r]^(.6){j'^*} \ar[d]^{\phi'^*} & \Pic(F) \ar[d]^{g'}  \\
				0\ar[r] & \Pic(\mathcal{U}) \ar[r]^(.45){\pi^*} & \Pic(\pi^{-1}(\mathcal{U})) \ar[r]^(.6){j^*} & \Pic(F) 
			}
		\end{align}
		where $j,\,j'$ denote the isomorphism of $F$ with some fiber, and $g'$ is the induced map.
		
		\begin{claim}\label{induced map on pic(F) is identity}
			The homomorphism $g'$ in \eqref{diagram 2} is the identity map.
		\end{claim}
		
		\begin{proof}[{Proof of claim}]
			First, note that for a point $y\,\in\, \mathcal{U}/\Gamma$, identifying $F$ with $\pi'^{-1}(y)$,
			\begin{align*}
				\phi'^{-1}(F)= \phi'^{-1}(\pi'^{-1}(y))  = \pi^{-1}(\phi^{-1}(y))
				& \simeq \pi^{-1}(\{y\}\times \Gamma)\\
				& = \pi^{-1}(y) \times \Gamma \\
				& \simeq F \times \Gamma,
			\end{align*}
			and hence $\Pic(\phi'^{-1}(F))\,= \,\bigoplus_{i=1}^{|\Gamma|}\Pic(F)$. Therefore,
			$g'$ can be expressed as the composition
			\[\Pic(F)\,=\, \Pic(\pi'^{-1}(y))\,\xrightarrow{\,\phi'^*\,}\,\bigoplus_{i=1}^{|\Gamma|}\Pic(F)
			\,\longrightarrow\,\Pic(F),\]
			where the last arrow is the projection to any one of the components. It is easy to see
			that this is the identity map, which proves the claim.
		\end{proof}		

We know that ${\rm Pic}(\Mxi) \,=\, {\mathbb Z}$ \cite[p.~55, Th\'eor\`eme B]{DN}. Since
the codimension of $\mathcal{Z}\,\, \subset\, \Mxi$ (see \eqref{efp2}) is at least three (noted following
\eqref{efp2}), we now conclude that ${\rm Pic}(\mathcal{U}) \,=\, {\mathbb Z}$. The Picard group
of $\Malpha$ is isomorphic to ${\mathbb Z}^{\oplus N}$, where
$$
N\,=\, 1+\sum_{i=1}^s (l_i-1)\,=\, 1-s +\sum_{i=1}^s l_i
$$
\cite{MS80}, \cite{LS}. Now from \eqref{codimension estimates} it follows that
$\text{Pic}(\pi^{-1}(\mathcal{U}))\,=\, {\mathbb Z}^N$.

Note that $\Gamma$ acts trivially on both $\Pic(\mathcal{U})$ and $\Pic(\pi^{-1}(\mathcal{U}))$ 
and both are torsion-free (as shown above), which implies that 
\[H^1(\Gamma,\, \Pic(\mathcal{U}))
\,=\, {\rm Hom}(\Gamma,\,\Pic(\mathcal{U}))\,=\, 0\,\,\,\, \text{ and}
\]
\[
H^1(\Gamma,\,
\Pic(\pi^{-1}(\mathcal{U})))\,=\, {\rm Hom}(\Gamma,\,\Pic(\pi^{-1}(\mathcal{U})))\,=\,0,\]
		since $\Gamma$ is finite. The five-term exact sequence corresponding to the Hochschild-Serre spectral sequence
		associated to $\phi$ and $\phi'$ \cite[III Theorem 2.20]{Mi} then produces following two exact sequences:
		\begin{flalign}\label{ses 1}			
			&\scalebox{0.9}{\xymatrix@R=2pc @C=1.5pc{
					0 \ar[r] & \chi(\Gamma) \ar[r]^(.4){f} & \Pic(\mathcal{U}/\Gamma) \ar[r]^{\phi^*} & \Pic(\mathcal{U}) \ar[r] & H^2(\Gamma, k^*) \ar[r] & \Br(\mathcal{U}/\Gamma) \ar[r] & \Br(\mathcal{U})\simeq \mathbb{Z}/\delta\mathbb{Z} \ar[r] & 0
			}}\\
			&\scalebox{0.85}{\xymatrix@R=2pc @C=1.5pc{
					0 \ar[r] & \chi(\Gamma) \ar[r]^(.3){f'} & \Pic(\pi^{-1}(\mathcal{U})/\Gamma) \ar[r]^{\phi'^*}  & \Pic(\pi^{-1}(\mathcal{U})) \ar[r] & H^2(\Gamma, k^*) \ar[r] &  \Br(\pi^{-1}(\mathcal{U})/\Gamma) \ar[r] & \Br(\pi^{-1}(\mathcal{U})) \ar@{=}[r] & 0
				}\label{ses 2}
			} 
		\end{flalign}
		whose rows are exact. Note that due to the codimension estimates in \eqref{codimension estimates},
		$$\Br(\mathcal{U})\,\cong\, \Br(\Mxi)\,\cong\, \mathbb{Z}/\delta\mathbb{Z}\,\,\, \ \text{ and }\, \,
\, \ \Br(\pi^{-1}(\mathcal{U}))\,\cong\, \Br(\Malpha)\,=\,0
$$
\cite[Theorem 1.1]{BD11}. Let $M\,:=\,
\text{coker}(f)$ and $M'\,:=\,\text{coker}(f')$. Using the snake lemma in the diagram
		\begin{align*}
			\xymatrix{
				0 \ar[r] & \chi(\Gamma) \ar[r]^(.4){f} \ar[d]^{\cong} & \Pic(\mathcal{U}/\Gamma) \ar[r]^(.6){\phi^*} \ar@{^{(}->}[d]^{\pi'^*}  & M \ar[r] \ar@{^{(}->}[d] & 0 \\
				0 \ar[r] & \chi(\Gamma) \ar[r]^(.3){f'} & \Pic(\pi^{-1}(\mathcal{U})/\Gamma) \ar[r]^(.7){\phi'^*} & M' \ar[r] & 0
			}
		\end{align*}
		we conclude that
		\begin{align}\label{equation 1}
			\Pic(F) \,\simeq\, \dfrac{M'}{M}.
		\end{align}
		
		Set $H\,:=\, \text{coker}(\phi^*)$ and
		$H'\,:=\, \text{coker}(\phi'^*)$ (see the diagram in \eqref{diagram 1}).
		The group $H$ is cyclic of order $\delta$ \cite[Theorem 1.2]{BH10}.
		Using the snake lemma in the diagram
		\begin{align*}
			\xymatrix{
				0\ar[r] & M \ar[r] \ar@{^{(}->}[d] & \Pic(\mathcal{U}) \ar[r] \ar@{^{(}->}[d] & H \ar[r] \ar@{^{(}->}[d] & 0\\
				0\ar[r] & M' \ar[r] & \Pic(\pi^{-1}(\mathcal{U})) \ar[r] & H' \ar[r] & 0
			}
		\end{align*} 
		and from (\ref{diagram 2}), it follows that
		\begin{align}\label{equation 2}
			0\,\longrightarrow \,\dfrac{M'}{M} \,\longrightarrow\, \Pic(F) \,\longrightarrow
			\,\dfrac{H'}{H}\,\longrightarrow\, 0.
		\end{align}
		Composing the first map with (\ref{equation 1}) gives
		\[\Pic(F)\,\simeq\,\dfrac{M'}{M}\,\longrightarrow\, \Pic(F)\]
		which is the identity map in Claim \ref{induced map on pic(F) is identity}, and hence 
		\[\dfrac{H'}{H}=0\implies H\simeq H'.\]
		
		{}From the exact sequences in \eqref{ses 1} and \eqref{ses 2} we get the following two exact sequences:
		\begin{enumerate}[(i)]
			\item $0\,\longrightarrow \,H \,\longrightarrow \,H^2(\Gamma,\, k^*)
			\,\longrightarrow\, \Br(\mathcal{U}/\Gamma)\,\longrightarrow\,\Br(\mathcal{U})\simeq\mathbb{Z}/\delta\mathbb{Z}\,\longrightarrow\,
			0$, and
			
			\item $0\,\longrightarrow\, H'\,\longrightarrow\, H^2(\Gamma,\,k^*)
			\,\longrightarrow\, \Br(\pi^{-1}(\mathcal{U})/\Gamma)\,\longrightarrow\, 0.$
		\end{enumerate}
		Since both $\PNalpha$ and $ N(r)_i $ are normal projective varieties and the open subsets $\mathcal{U}/\Gamma$ and $\pi^{-1}(\mathcal{U})/\Gamma$ both have complement of codimension 2, it follows that $\pi^*$ induces isomorphism $$k^*\,=\,H^0(\mathcal{U}/\Gamma,\,\mathbb{G}_m)\,\longrightarrow\,
H^0(\pi^{-1}(\mathcal{U})/\Gamma,\,\mathbb{G}_m)\,=\,k^*,$$
		and thus it induces isomorphism $H^2(\Gamma,\,k^*)\,\longrightarrow \,H^2(\Gamma,\,k^*)$, which
takes $H$ to $H'$.
		Thus, from the exact sequences (i) and (ii) we finally conclude that
		\begin{align*}
			\Br(\mathcal{U}/\Gamma)\,\simeq\,\dfrac{H^2(\Gamma, k^*)}{H'}\simeq \dfrac{H^2(\Gamma, k^*)}{H}\,\simeq\,ker\left(\Br(\pi^{-1}(\mathcal{U})/\Gamma)\rightarrow \mathbb{Z}/\delta\mathbb{Z}\right).
		\end{align*}
		Applying \eqref{brauer groups of open sets} proves our claim. 
	\end{proof}
	
		\section{Brauer groups for arbitrary generic weights}

		Let us consider the case of an arbitrary generic weight corresponding to a parabolic structure satisfying the condition (\ref{coprime condition on parabolic structure}), namely
		$$g.c.d.(r,d,m^1_{1},\cdots,m^1_{l_1},\cdots,m^i_j,\cdots,m^s_1,\cdots,m^s_{l_s})=1.$$
		For any such generic weight $\alpha$, let us denote as usual the smooth locus of $\PNalpha$ by $PN^{sm}_{\alpha,i}$. Let $\alpha$ and $\beta$ be two generic weights separated by a single wall $W$ (cf. 
		\cite[\S~2]{BY99}). In \cite[\S~5, page 11]{Th02} it is shown that there exists a closed 
		subscheme $Y_\alpha\,\hookrightarrow\, \Malpha$ whose image is precisely the locus of those 
		parabolic bundles that are stable for weight $\alpha$ but not stable for weight $\beta$, and thus 
		$\Malpha\setminus Y_\alpha$ is the open subset of $\Malpha$ consisting precisely of those 
		parabolic bundles which are both $\alpha$-stable as well as $\beta$-stable. Similarly, 
		there exists a closed subscheme $Y_\beta\,\hookrightarrow\, \Mbeta$ with exactly similar 
		properties as $Y_\alpha$ when $\alpha$ and $\beta$ are interchanged.
		Thus there exists a natural isomorphism
		\begin{align}\label{isomorphic open sets in Malpha and Mbeta}
			\Malpha\setminus Y_\alpha \,\simeq\, \Mbeta\setminus Y_\beta.
		\end{align}
		Moreover, both $Y_\alpha$ and $Y_\beta$ have codimension at least 2 in $\Malpha$ and 
		$\Mbeta$ respectively.
		
		From their descriptions it is clear that both $U_\alpha\,=\,\Malpha\setminus Y_\alpha$ and 
		$U_\beta\,=\, \Mbeta\setminus Y_\beta$ are $\Gamma$-invariant, and their identification in 
		(\ref{isomorphic open sets in Malpha and Mbeta}) is $\Gamma$-equivariant. Thus, if we consider the closed subschemes $Z_\alpha$ and $Z_\beta$ of $\Malpha$ and $\Mbeta$ respectively as in Corollary \ref{cor1}, then $V_\alpha:= U_\alpha\setminus Z_\alpha$ and $V_\beta:= U_\beta\setminus Z_\beta$ are $\Gamma$-invariant open subsets of $\Malpha$ and $\Mbeta$ respectively; but now $\Gamma$ acts freely on these sets. It follows that the isomorphism in (\ref{isomorphic open sets in Malpha and Mbeta}) descends to an isomorphism
		\begin{align}
			V_\alpha/\Gamma \,\simeq\, V_\beta/\Gamma,
		\end{align}
		and moreover, under the quotients by the $\Gamma$-action $\phi_\alpha:\Malpha\,\longrightarrow\, \PNalpha$ and $\phi_\beta:\Mbeta\,
		\longrightarrow\,\textnormal{PN}_{\beta,i}$\,\,, we have $V_\alpha/\Gamma \subset PN^{sm}_{\alpha,i}$ and $V_\beta/\Gamma\subset PN^{sm}_{\beta,i}$. Of course, since $\phi_\alpha$ is a finite morphism, the complement of $V_\alpha/\Gamma$ in $PN^{sm}_{\alpha,i}$ will still be of codimension at least 2; the same is true if we replace $\alpha$ by $\beta$. Hence we conclude that when $\alpha$ and $\beta$ are in adjacent chambers separated by a single wall,
		\begin{align}\label{brauer group isomorphism for generic weights}
			\Br(PN_{\alpha,i}^{sm})\,\cong\, \Br(PN^{sm}_{\beta,i}).
		\end{align}
		
		Now, since there are only finitely many walls, we can arrange the collection of chambers in 
		a sequence, say $C_1,\,C_2,\,\cdots,\,C_N$, such that $C_1$ contains a sufficiently small weight, 
		and for each $1\,\leq \,j\,<\, N$, the chambers $C_j$ and $C_{j+1}$ are separated by a single wall. Choose 
		generic weights $\alpha_j\,\in\, C_j$ for each $j$, such that $\alpha_1$ is a sufficiently small weight. By (\ref{brauer group isomorphism for generic weights}), we have
		\[\Br(\textnormal{PN}^{sm}_{\alpha_j,i})\,\cong\, \Br(\textnormal{PN}^{sm}_{\alpha_{j+1},i})\ \ \,\,
		\forall\ \ 1\,\leq\, j\,\leq\,N.\]
		
		Thus we obtain the following:
		
		\begin{corollary}
			For any two generic weights $\alpha$ and $\beta$ corresponding to a parabolic structure satisfying the condition $g.c.d.(r,d,m^1_{1},\cdots,m^1_{l_1},\cdots,m^i_j,\cdots,m^s_1,\cdots,m^s_{l_s})=1,$ we have
			$$\Br(\,PN^{sm}_{\alpha,i}\,)\,\cong\,\Br(\,\textnormal{PN}^{sm}_{\beta,i}\,).$$
			Thus, Proposition \ref{brauer group iso 1} remains valid for arbitrary generic weights satisfying this condition.
		\end{corollary}
		This also completes the proof of Theorem \ref{thi} $(2)$.
		\section{Brauer groups for certain non-generic weights}\label{non-generic weight section}
		It is known \cite[\S~2]{BY99} that the set of all possible  weights corresponding to a given quasi-parabolic structure forms a simplex, and there are finitely many hyperplanes, called \textit{walls}, which intersect this simplex. The generic weights are precisely those weights which do not lie on these walls. We call a weight to be non-generic if it lies on a wall. \\
		\begin{proposition}\label{non-generic weight proposition}
			Let us consider a parabolic structure satisfying $g.c.d.(r,d,m^1_{1},\cdots,m^1_{l_1},\cdots,m^i_j,\cdots,m^s_1,\cdots,m^s_{l_s})=1$. Let $\gamma$ be a non-generic weight corresponding to this parabolic structure, lying on a single wall (that is, it does not lie on the intersection of two or more walls). If $\alpha$ is a generic weight (we know a generic weight exists \cite[Proposition 3.2]{BY99}), we have
			\[\Br(PN^{sm}_{\alpha,i})\simeq \Br(PN^{sm}_{\gamma,i}).\]
		\end{proposition}
		
		\begin{proof}
			Suppose that the weight $\gamma$ lies on a single wall $W$. It follows that in a small enough neighborhood of $\gamma$, we can choose two generic weights $\alpha$ and $\beta$ which are separated by $W$ only, and $\gamma$ is the point of intersection between $W$ and the line joining $\alpha$ and $\beta$.  By \cite[Lemma 3.6]{BH95}, any $\alpha$-parabolic stable bundle is $\gamma$-parabolic semistable, which induces a $\Gamma$-equivariant morphism $\varphi_\alpha\,:\, M_\alpha\,\longrightarrow\, M_\gamma$ by simply replacing the weights. Note that $M_\gamma$ is no longer smooth. Let $U_\gamma\subset M_\gamma$ be the open subset consisting of $\gamma$-parabolic stable bundles. From \cite[Theorem 3.1]{BH95} it follows that $\varphi_\alpha$ is an isomorphism between the open subsets $U_\alpha:=\varphi_\alpha^{-1}(U_\gamma)$ and $U_\gamma$\,, and moreover, both of them have complements of codimension at least 2. Since $U_\gamma$ is $\Gamma$-invariant, this isomorphism is $\Gamma$-equivariant. The rest of the argument is similar to above. Namely, if we consider the closed subschemes $Z_\alpha$ and $Z_\gamma$ of $M_\alpha$ and $M_\gamma$ respectively as in Corollary \ref{cor1}, then $V_\alpha:=U_\alpha\setminus Z_\alpha$ and $V_\gamma:= U_\gamma\setminus Z_\gamma$ are also $\Gamma$-equivariantly isomorphic under $\varphi_\alpha$, and thus descends to an isomorphism
			\[V_\alpha/\Gamma \simeq V_\gamma/\Gamma.\]
			Since $\Gamma$ acts freely on both $V_\alpha$ and $V_\gamma$, we have $V_\alpha/\Gamma\subset PN^{sm}_{\alpha,i}$ and its complement will still have codimension 2. Of course, the same is true for $V_\gamma/\Gamma\subset PN^{sm}_{\gamma,i}$ as well.
			From this we conclude that 
			$$\Br(PN^{sm}_{\alpha,i})\simeq \Br(PN^{sm}_{\gamma,i})\,.$$
			Note that $\Br(PN^{sm}_{\alpha,i})$ is already known by Proposition \ref{brauer group iso 1}. 
		\end{proof}
		
\section*{Acknowledgements}

We thank the referee for helpful comments.

	\end{document}